\newcommand{\subj}[1]{\par\noindent{\bf Mathematics Subject Classification 2010: }#1.}
\newcommand{\keyw}[1]{\par\noindent{\bf Keywords: }#1.}
\newcommand{\fonction}[5]{\begin{array}[t]{lrcl}#1 :&#2 &\longrightarrow &#3\\&#4& \longmapsto &#5 \end{array}}
\newcommand{\R}{\mathbb{R}}
\newcommand{\C}{\mathbb{C}}
\newcommand{\N}{\mathbb{N}}
\theoremstyle{definition}
\newtheorem{definition}{Definition}
\newtheorem{theorem}{Theorem}
\newtheorem{example}{Example}
\newtheorem{lemma}{Lemma}
\theoremstyle{remark}
\newtheorem{remark}{Remark}
\def\a{\alpha}
\def\r{\rho}
\def\t{\tau}
\def\DS{\displaystyle}
\def\LD{{^CD_{a+}^{\a,\r}}}
\def\RD{{^CD_{b-}^{\a,\r}}}
\def\LI{{I_{a+}^{\a,\r}}}
\def\RI{{I_{b-}^{\a,\r}}}
\begin{document}

\title{Fractional differential equations with dependence on the Caputo--Katugampola derivative}

\author{Ricardo Almeida$^1$\\
{\tt ricardo.almeida@ua.pt}
\and
Agnieszka B. Malinowska$^{2}$\\
{\tt a.malinowska@pb.edu.pl}
\and
Tatiana Odzijewicz$^{3}$\\
{\tt tatiana.odzijewicz@sgh.waw.pl}}

\date{$^1$Center for Research and Development in Mathematics and Applications (CIDMA)\\
Department of Mathematics, University of Aveiro, 3810--193 Aveiro, Portugal\\
$^2$Faculty of Computer Science, Bia\l ystok University of Technology,\\ 15-351 Bia\l ystok, Poland\\
$^3$Department of Mathematics and Mathematical Economics,\\
Warsaw School of Economics\\
 02-554 Warsaw, Poland}

\maketitle


\begin{abstract}

In this paper we present a new type of fractional operator, the Caputo--Katugampola derivative. The Caputo and the Caputo--Hadamard fractional derivatives are special cases of this new operator. An existence and uniqueness theorem for a fractional Cauchy type problem, with dependence on the Caputo--Katugampola derivative, is proven. A decomposition formula for the Caputo--Katugampola derivative is obtained. This formula allows us to provide a simple numerical procedure to solve the fractional differential equation.

\end{abstract}

\subj{26A33, 34A08, 34K28}

\keyw{fractional calculus, fractional differential equations, Caputo--Katugampola derivative, numerical methods}


\section{Introduction}

Fractional calculus is a natural extension of ordinary calculus, where integrals and derivatives are defined for arbitrary real orders.  Since $17th$ century, when fractional calculus was born, several different derivatives have been introduced: Riemann-Liouville, Hadamard, Grunwald--Letnikov, Caputo, just to mention a few \cite{Kilbas,Podlubny,Samko}, each of them with its own advantages and disadvantages. The choice of an appropriate fractional derivative (or integral) depends on the considered system, and for this reason we find a large number of works devoted to different fractional operators. On the other hand, it is natural to look for and study generalized fractional operators, for which the known ones are particular cases. Recently, U. Katugampola presented new types of fractional operators, which generalize both the Riemann--Liouville and Hadamard fractional operators \cite{Katugampola1,Katugampola2,Katugampola3}.

In this paper we present a new fractional operator, called the Caputo--Katugampola derivative, which generalizes the concept of Caputo and Caputo--Hadamard fractional derivatives (Section~\ref{sec:FC}). The new operator is the left inverse of the Katugampola fractional integral and keeps some of the fundamental properties of the Caputo and Caputo--Hadamard fractional derivatives. In Section~\ref{sec:FDE} we study a fractional differential equation (FDE), where the differential operator is the Caputo--Katugampola derivative, showing an existence and uniqueness theorem for the Cauchy type problem. Section~\ref{sec:approx} provides an approximated expansion of the Caputo--Katugampola derivative, that involves only the first-order derivative. Applying this formula to a given Caputo--Katugampola fractional Cauchy problem we obtain a sequence of systems of $N+1$ ordinary differential equations with $N+1$ initial conditions, where $N\in \N$. A sequence $(x_N)$ of the solutions to those systems of ODE's converges to the solution of the fractional problem. The numerical procedure to solve the fractional differential equation is presented in Section~\ref{Sec:examples} throughout an example. Finally, Section~\ref{con} contains some conclusions.

\section{Caputo--Katugampola fractional derivative}\label{sec:FC}

In \cite{Katugampola1}, the author introduced new fractional integral operators, called the Katugampola fractional integrals, in the following way:
$${I_{a+}^{\a,\r}}x(t)=\frac{\r^{1-\a}}{\Gamma(\alpha)}\int_a^t \frac{\t^{\r-1}}{(t^\r-\t^\r)^{1-\a}}x(\t) d\tau,$$
$${I_{b-}^{\a,\r}}x(r)=\frac{\r^{1-\a}}{\Gamma(\alpha)}\int_t^b \frac{\t^{\r-1}}{(\t^\r-t^\r)^{1-\a}}x(\t) d\tau,$$
where $0<a<b<\infty$, $x:[a,b]\rightarrow\mathbb{R}$ is an integrable function, and $\alpha\in(0,1)$ and $\rho>0$ two fixed real numbers.
Then, in \cite{Katugampola2}, it appeared a generalization to { the} Riemann--Liouville and Hadamard fractional derivatives, called the Katugampola fractional derivatives:
$${D_{a+}^{\a,\r}}x(t)=\frac{\r^\a}{\Gamma(1-\alpha)}t^{1-\r}\frac{d}{dt}\int_a^t \frac{\t^{\r-1}}{(t^\r-\t^\r)^\a}x(\t) d\tau,$$
and
$${D_{b-}^{\a,\r}}x(t)=\frac{-\r^\a}{\Gamma(1-\alpha)}t^{1-\r}\frac{d}{dt}\int_t^b \frac{\t^{\r-1}}{(\t^\r-t^\r)^\a}x(\t) d\tau.$$
The relation between these two fractional operators is the following:
$${D_{a+}^{\a,\r}}x(t)=t^{1-\r}\frac{d}{dt}{I_{a+}^{1-\a,\r}}x(t) \quad \mbox{and} \quad {D_{b-}^{\a,\r}}x(r)=-t^{1-\r}\frac{d}{dt}{I_{b-}^{1-\a,\r}}x(r).$$
Also, when $\a>0$ is an integer, these fractional integrals are generalizations of the n-fold integrals
$$\int_a^t \t_1^{\r-1}\, d\t_1\,\int_a^{\t_1} \t_2^{\r-1}\, d\t_2\,\ldots \int_a^{\t_{n-1}} \t_n^{\r-1}x(\t_n)\, d\t_n$$
and
$$\int_t^b \t_1^{\r-1}\, d\t_1\,\int_{\t_1}^b \t_2^{\r-1}\, d\t_2\,\ldots \int_{\t_{n-1}}^b \t_n^{\r-1}x(\t_n)\, d\t_n,$$
respectively.

Motivated by those definitions, we introduce a new fractional derivative as follows.

\begin{definition}\label{CKFD}
Let $0<a<b<\infty$, $x:[a,b]\rightarrow\mathbb{R}$ be an integrable function, and $\alpha\in(0,1)$ and $\rho>0$ two fixed reals. The left and right Caputo--Katugampola fractional derivatives of order $\a$ are defined by
$$\LD x(t) ={D_{a+}^{\a,\r}}[x(t)-x(a)]=\frac{\r^\a}{\Gamma(1-\alpha)}t^{1-\r}\frac{d}{dt}\int_a^t \frac{\t^{\r-1}}{(t^\r-\t^\r)^\a}[x(\t)-x(a)] d\tau,$$
and
$$\RD x(t)={D_{b-}^{\a,\r}}[x(t)-x(b)]=\frac{-\r^\a}{\Gamma(1-\alpha)}t^{1-\r}\frac{d}{dt}\int_t^b \frac{\t^{\r-1}}{(\t^\r-t^\r)^\a}[x(\t)-x(b)] d\tau,$$
respectively.
\end{definition}

Observe that, when $x(a)=0$ (resp. $x(b)=0$), the left (resp. right) Caputo--Katugampola and the Katugampola fractional derivatives coincide.
Moreover, there exists a relation between these { two types} of fractional derivatives. Namely, if both types of fractional derivative exist, then
\begin{equation}\label{eq:derivat}
\LD x(t)={D_{a+}^{\a,\r}}[x(t)-x(a)]={D_{a+}^{\a,\r}}x(t)-{D_{a+}^{\a,\r}}x(a)={D_{a+}^{\a,\r}}x(t)-\frac{x(a)\r^\a}{\Gamma(1-\a)}(t^\r-a^\r)^{-\a},
\end{equation}
and
$$\RD x(t)={D_{b-}^{\a,\r}}[x(t)-x(b)]={D_{b-}^{\a,\r}}x(t)-{D_{b-}^{\a,\r}}x(b)={D_{b-}^{\a,\r}}x(t)-\frac{x(b)\r^\a}{\Gamma(1-\a)}(b^\r-t^\r)^{-\a}.$$

\begin{theorem}\label{teo:equiv} Let $x\in C^1([a,b]).$ Then,
$$\LD x(t) =\frac{\r^\a}{\Gamma(1-\alpha)}\int_a^t (t^\r-\t^\r)^{-\a}x'(\t) d\tau,$$
and
$$\RD x(t)=\frac{-\r^\a}{\Gamma(1-\alpha)}\int_t^b (\t^\r-t^\r)^{-\a}x'(\t) d\tau.$$
\end{theorem}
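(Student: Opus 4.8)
The plan is to reduce the defining expression to the claimed formula by two maneuvers: an integration by parts that transfers the derivative from the singular kernel onto $x$, followed by a now-legitimate differentiation under the integral sign. I would fix $t\in(a,b]$ and concentrate on the inner integral $\int_a^t \t^{\r-1}(t^\r-\t^\r)^{-\a}[x(\t)-x(a)]\,d\t$. The obstruction to differentiating directly in $t$ is the singularity of the kernel as $\t\to t$. To tame it, I would integrate by parts, taking $x(\t)-x(a)$ as the factor to differentiate and $\t^{\r-1}(t^\r-\t^\r)^{-\a}$ as the factor to integrate; its $\t$-antiderivative is $-(t^\r-\t^\r)^{1-\a}/[\r(1-\a)]$. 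The boundary terms vanish: at $\t=t$ because $(t^\r-t^\r)^{1-\a}=0$ (here $1-\a>0$ is essential), and at $\t=a$ because $x(a)-x(a)=0$. This gives
$$\int_a^t \frac{\t^{\r-1}}{(t^\r-\t^\r)^\a}[x(\t)-x(a)]\,d\t=\frac{1}{\r(1-\a)}\int_a^t (t^\r-\t^\r)^{1-\a}\,x'(\t)\,d\t.$$

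The decisive gain is that the new kernel $(t^\r-\t^\r)^{1-\a}$ is continuous on $[a,t]$, so differentiation under the integral sign is now valid. I would then apply $\frac{\r^\a}{\Gamma(1-\a)}\,t^{1-\r}\frac{d}{dt}$ to the right-hand side. By the Leibniz rule the boundary contribution $(t^\r-t^\r)^{1-\a}x'(t)$ is again zero, while differentiating the kernel in $t$ produces a factor $\r(1-\a)\,t^{\r-1}(t^\r-\t^\r)^{-\a}$. The constants $\r(1-\a)$ cancel and the powers $t^{1-\r}$ and $t^{\r-1}$ combine to $1$, leaving
$$\LD x(t)=\frac{\r^\a}{\Gamma(1-\a)}\int_a^t (t^\r-\t^\r)^{-\a}\,x'(\t)\,d\t,$$
which is the asserted identity.

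The main obstacle is the rigorous justification of the differentiation under the integral sign: after the integration by parts one must verify that the integrand $(t^\r-\t^\r)^{1-\a}x'(\t)$ and its $t$-partial derivative satisfy the Leibniz hypotheses, i.e. local integrability together with a dominating function near $\t=t$. Since $x\in C^1([a,b])$ makes $x'$ bounded and $1-\a>0$ keeps the relevant exponents admissible (the final integral being only weakly singular and hence convergent), this is routine but is precisely the step that consumes the $C^1$ assumption and the structural cancellation of the singularity. The right-derivative identity follows by the identical argument, with the orientation of the endpoints reversed and the accompanying sign change, so I would simply remark that it is analogous.
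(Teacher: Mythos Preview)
Your argument is correct and is essentially identical to the paper's own proof: the paper likewise integrates by parts with $u'(\t)=\t^{\r-1}(t^\r-\t^\r)^{-\a}$ and $v(\t)=x(\t)-x(a)$, then differentiates the resulting integral in $t$. If anything, your write-up is more careful in spelling out why the boundary terms vanish and why the Leibniz rule applies after the singularity has been weakened.
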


\begin{proof}
Taking
$$u'(\t)=\t^{\r-1}(t^\r-\t^\r)^{-\a} \quad\mbox{and} \quad v(\t)=x(\t)-x(a),$$
and then integrating by parts we obtain
$$\LD x(t) =\frac{\r^\a}{\Gamma(1-\alpha)}t^{1-\r}\frac{d}{dt}\int_a^t u'(\tau)v(\tau)d\tau.$$

We may now differentiate the integral, with respect to variable $t$, to conclude that
$$\LD x(t) =\frac{\r^\a}{\Gamma(1-\alpha)}\int_a^t (t^\r-\t^\r)^{-\a}x'(\t) d\tau.$$
The second formula can be proven in a similar way.
\end{proof}

\begin{remark}
Observe that, from Theorem~\ref{teo:equiv}, we immediately obtain the following:
\begin{equation}\label{Derivative_Integral}\LD x(t) ={I_{a+}^{1-\a,\r}}\left(\t^{1-\r}\frac{d}{d\t}x\right)(t) \quad \mbox{and} \quad \RD x(t) ={I_{b-}^{1-\a,\r}}\left(-\t^{1-\r}\frac{d}{d\t}x\right)(t),\end{equation}
where ${I_{a+}^{1-\a,\r}}$ and ${I_{b-}^{1-\a,\r}}$ denote the left and right Katugampola fractional integrals.
\end{remark}

\begin{remark} Although in Definition~\ref{CKFD} we put $\a\in(0,1)$, it is possible to define the Caputo--Katugampola fractional derivatives for an arbitrary $\a>0$.  Let $n$ be the smallest integer greater than $\a$. Then, the left and right Caputo--Katugampola fractional derivatives of order $\a>0$ are defined by
$$\LD x(t) =\frac{\r^{\a-n+1}}{\Gamma(n-\alpha)}\int_a^t \frac{\t^{(\r-1)(1-n)}}{(t^\r-\t^\r)^{\a-n+1}}x^{(n)}(\t)d\tau,$$
and
$$\RD x(t)=\frac{(-1)^n\r^{\a-n+1}}{\Gamma(n-\alpha)}\int_t^b \frac{\t^{(\r-1)(1-n)}}{(\t^\r-t^\r)^{\a-n+1}}x^{(n)}(\t)d\tau,$$
respectively. In this case, when $\r=1$, we obtain the left and right Caputo fractional derivatives (cf. \cite{Kilbas}), and for $\r\to0^+$, attending that
$\lim_{\r\to 0^+} (t^\r-\t^\r)/\r=\ln(t/\t)$,
we obtain the left and right  Caputo--Hadamard fractional derivatives as defined in \cite{Baleanu1,Baleanu2}.
\end{remark}

Now let us prove that the left and the right Katugampola fractional integrals are bounded in the space of continuous functions.
\begin{theorem}\label{lem:contInt}
The operators $\LI$ and $\RI$ are linear and bounded from $C([a,b])$ to $C([a,b])$, i.e.,
\begin{equation}\label{eq:boundedL}
\left\|\LI x\right\|_C\leq K_{\alpha,\rho}\left\|x\right\|_C,~\left\|\RI x\right\|_C\leq K_{\alpha,\rho}\left\|x\right\|_C,
~\textnormal{with } K_{\alpha,\rho}=\frac{\rho^{-\alpha}}{\Gamma(\alpha+1)}(b^{\rho}-a^{\rho})^{\alpha},
\end{equation}
where $\left\|x\right\|_C=\max\limits_{t\in [a,b]} \left|x(t)\right|$.
\end{theorem}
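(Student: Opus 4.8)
The plan is to treat the two operators symmetrically, carrying out the argument in detail for $\LI$ and noting that $\RI$ follows by the same reasoning with the endpoints interchanged. Linearity is immediate, since $\LI$ acts by integration against a fixed kernel and integration is linear in $x$. For the boundedness estimate in \eqref{eq:boundedL}, I would fix $t\in[a,b]$, move the absolute value inside the integral, and use $|x(\t)|\leq\|x\|_C$ to pull the norm out, so that the whole matter reduces to estimating the kernel integral
$$\int_a^t \frac{\t^{\r-1}}{(t^\r-\t^\r)^{1-\a}}\,d\t.$$

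The key computation is to evaluate this integral in closed form via the substitution $u=t^\r-\t^\r$, for which $\t^{\r-1}\,d\t=-\r^{-1}\,du$ and the limits become $u=t^\r-a^\r$ at $\t=a$ and $u=0$ at $\t=t$. This turns the kernel integral into
$$\frac{1}{\r}\int_0^{t^\r-a^\r} u^{\a-1}\,du=\frac{1}{\r\a}(t^\r-a^\r)^\a.$$
Folding in the prefactor $\r^{1-\a}/\Gamma(\a)$ and using $\a\Gamma(\a)=\Gamma(\a+1)$ gives the pointwise bound
$$\left|\LI x(t)\right|\leq \frac{\r^{-\a}}{\Gamma(\a+1)}(t^\r-a^\r)^\a\,\|x\|_C.$$
Since $t\leq b$ and $\r>0$ yield $t^\r-a^\r\leq b^\r-a^\r$, and the exponent $\a$ is positive, I may replace $t^\r-a^\r$ by $b^\r-a^\r$ to obtain $\left|\LI x(t)\right|\leq K_{\a,\r}\|x\|_C$ uniformly in $t$; taking the maximum over $t\in[a,b]$ delivers the claimed inequality.

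The point requiring genuine care is that the statement asserts $\LI$ maps \emph{into} $C([a,b])$, so one must check that the image is continuous and not merely bounded. The singularity of the kernel at $\t=t$ is integrable because $1-\a<1$, so $\LI x(t)$ is well defined for each $t$. To establish continuity I would take $t_1<t_2$ and split the estimate of $|\LI x(t_2)-\LI x(t_1)|$ over $[a,t_1]$ and $[t_1,t_2]$: the piece over the short interval $[t_1,t_2]$ is controlled by the same kernel bound applied there and vanishes as $t_2\to t_1$, while the piece over $[a,t_1]$ is handled by dominated convergence, using that $(t^\r-\t^\r)^{\a-1}$ depends continuously on $t$ for $\t$ bounded away from the upper endpoint and is dominated by an integrable majorant. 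I expect this continuity verification, rather than the norm estimate, to be the main obstacle, since the bound itself drops out immediately from the substitution above. The corresponding assertions for $\RI$ follow verbatim, with $u=\t^\r-t^\r$ and the roles of $a$ and $b$ reversed.
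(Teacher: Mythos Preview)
Your argument for the norm bound is correct and mirrors the paper's proof exactly: pull $\|x\|_C$ outside the integral, evaluate the remaining kernel integral (the paper omits the explicit substitution $u=t^\r-\t^\r$ but arrives at the same value), and replace $t^\r-a^\r$ by $b^\r-a^\r$. You in fact go further than the paper by sketching the continuity of $\LI x$, a point the paper's proof does not address even though the statement claims the target space is $C([a,b])$.
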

\begin{proof}
For any $x\in C([a,b])$, one has
$$\begin{array}{ll}
\displaystyle\left|\frac{\rho^{1-\alpha}}{\Gamma(\alpha)}\int_a^t\frac{\tau^{\rho-1}}{(t^{\rho}-\tau^{\rho})^{1-\alpha}}x(\tau)\; d\tau\right|&
\displaystyle\leq\frac{\rho^{1-\alpha}}{\Gamma(\alpha)}\left\|x\right\|_C\int_a^t\frac{\tau^{\rho-1}}{(t^{\rho}-\tau^{\rho})^{1-\alpha}}\; d\tau\\
&\displaystyle\leq\frac{\rho^{-\alpha}}{\Gamma(\alpha+1)}(b^{\rho}-a^{\rho})^{\alpha}\left\|x\right\|_C.
\end{array}$$
In the case of the right Katugampola fractional integral, the proof is similar.
\end{proof}
Using the above lemma, one can show the following theorem concerning continuity of the Caputo--Katugampola fractional derivatives.

\begin{theorem}\label{teo:bounds} Let $x\in C^1([a,b]).$ Then, the Caputo--Katugampola fractional derivatives $\LD x$ and $\RD x$ exist and are continuous on the interval $[a,b]$. Moreover,  $\LD x(t)=0$ at $t=a$, and $\RD x(t)=0$ at $t=b$.
\end{theorem}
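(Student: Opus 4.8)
The plan is to reduce everything to the integral representation \eqref{Derivative_Integral} and the boundedness statement of Theorem~\ref{lem:contInt}. Since $x\in C^1([a,b])$ and $0<a<b<\infty$, the function $y(\t):=\t^{1-\r}x'(\t)$ is continuous on $[a,b]$: indeed $x'$ is continuous by hypothesis, and the weight $\t^{1-\r}$ is continuous and bounded on $[a,b]$ precisely because the interval is bounded away from the origin. By \eqref{Derivative_Integral} we then have $\LD x={I_{a+}^{1-\a,\r}}y$ and $\RD x={I_{b-}^{1-\a,\r}}(-y)$, so the existence of both derivatives is immediate once $y$ is seen to be continuous, which it is.

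For continuity, I would apply Theorem~\ref{lem:contInt} with $\a$ replaced by $1-\a$. This is legitimate because $\a\in(0,1)$ forces $1-\a\in(0,1)$, so the hypotheses of that theorem are met for the order $1-\a$. It follows that ${I_{a+}^{1-\a,\r}}$ and ${I_{b-}^{1-\a,\r}}$ map $C([a,b])$ into $C([a,b])$; applying them to the continuous function $\pm y$ shows that $\LD x$ and $\RD x$ are continuous on $[a,b]$.

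Finally, for the boundary values I would use a localized form of the estimate in the proof of Theorem~\ref{lem:contInt}, obtained by replacing the upper limit $b$ by the running variable $t$. With the order $1-\a$ this yields
$$\left|\LD x(t)\right|\le \frac{\r^{-(1-\a)}}{\Gamma(2-\a)}\,(t^\r-a^\r)^{1-\a}\,\|y\|_C,$$
and the right-hand side tends to $0$ as $t\to a^+$. Since $\LD x$ is continuous at $a$, this forces $\LD x(a)=0$. The identity $\RD x(b)=0$ follows symmetrically from the bound $\left|\RD x(t)\right|\le \frac{\r^{-(1-\a)}}{\Gamma(2-\a)}(b^\r-t^\r)^{1-\a}\|y\|_C$ upon letting $t\to b^-$.

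There is no genuine obstacle here: the whole argument is bookkeeping built on the two preceding results. The only points deserving attention are the shift of order from $\a$ to $1-\a$ in Theorem~\ref{lem:contInt}, and the observation that the weight $\t^{1-\r}$ introduces no singularity because $a>0$; both are routine.
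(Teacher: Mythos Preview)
Your argument is correct and mirrors the paper's proof almost exactly: the paper also invokes \eqref{Derivative_Integral} together with Theorem~\ref{lem:contInt} (at order $1-\a$) for continuity, and then bounds $|\LD x(t)|$ by $M\r^{\a-1}(t^\r-a^\r)^{1-\a}/\Gamma(2-\a)$ with $M=\max_{t\in[a,b]}|t^{1-\r}x'(t)|=\|y\|_C$, which is precisely your localized estimate. Your explicit remarks about the order shift and the harmlessness of the weight $\t^{1-\r}$ on $[a,b]$ (since $a>0$) are useful clarifications that the paper leaves implicit.
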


\begin{proof} Knowing that
$$\LD x(t) ={I_{a+}^{1-\a,\r}}\left(\t^{1-\r}\frac{d}{d\t}x\right)(t),$$
by Theorem~\ref{lem:contInt}, we get a continuity of $\LD x$. In order to prove the second part, we start with the formula
$$\LD x(t) =\frac{\r^\a}{\Gamma(1-\alpha)}\int_a^t (t^\r-\t^\r)^{-\a}x'(\t) d\tau,$$
that holds for all $t\in [a,b]$. Now, letting $M=\max_{t\in[a,b]}|t^{1-\r}x'(t)|$, we obtain
$$\left|\LD x(t)\right|\leq M\frac{\r^{\a-1}}{\Gamma(2-\a)}(t^\r-a^\r)^{1-\a}.$$ Therefore, $\LD x(t)=0$ at $t=a$.
One can obtain the results for the right Caputo-Katugampola fractional derivative in a similar way.
\end{proof}

In the space $C^1([a,b])$ we consider the norm
$$\|x\|_{C^1}:=\max_{t\in[a,b]}|x(t)|+\max_{t\in[a,b]}|x'(t)|.$$
Also, we define the two subspaces $C_a([a,b])$  and $C_b([a,b])$  of the space $C([a,b])$ by
$$C_a([a,b]):=\left\{x\in C([a,b])\, : \, x(a)=0\right\}, $$
and
$$C_b([a,b]):=\left\{x\in C([a,b])\, : \, x(b)=0\right\},$$
endowed with the norm
$$\|x\|_{C}:=\max_{t\in[a,b]}|x(t)|.$$

\begin{theorem} The Caputo--Katugampola fractional derivatives $\LD$ and $\RD$ are bounded operators from $C^1([a,b])$ to $C_a([a,b])$ and $C_b([a,b])$, respectively. Moreover,
$$\left\|\LD x\right\|_{C}\leq M \|x\|_{C^1} \quad \mbox{and} \quad \left\|\RD x\right\|_{C}\leq M \|x\|_{C^1},$$
where
$$M= \frac{\r^{\a-1}(b^\r-a^\r)^{1-\a}}{\Gamma(2-\a)}\max\{a^{1-\r},b^{1-\r}\}.$$
\end{theorem}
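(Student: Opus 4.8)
The plan is to lean on the estimate already produced inside the proof of Theorem~\ref{teo:bounds}, since the genuinely analytic work has been carried out there. First I would observe that the codomain claim is immediate: Theorem~\ref{teo:bounds} asserts that for $x\in C^1([a,b])$ the derivative $\LD x$ is continuous on $[a,b]$ and satisfies $\LD x(a)=0$, which is exactly the requirement $\LD x\in C_a([a,b])$; symmetrically $\RD x\in C_b([a,b])$. Linearity of both operators is clear from the integral representation in Theorem~\ref{teo:equiv}. Hence only the norm inequality needs argument.

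For the bound I would start from
$$\LD x(t) =\frac{\r^\a}{\Gamma(1-\alpha)}\int_a^t (t^\r-\t^\r)^{-\a}x'(\t)\,d\tau,$$
given by Theorem~\ref{teo:equiv}, and split $x'(\t)=\t^{\r-1}\cdot\bigl(\t^{1-\r}x'(\t)\bigr)$. Pulling the quantity $\max_{\t}|\t^{1-\r}x'(\t)|$ out of the integral and evaluating $\int_a^t (t^\r-\t^\r)^{-\a}\t^{\r-1}\,d\tau$ by the substitution $u=\t^\r$ yields the elementary value $\r^{-1}(1-\a)^{-1}(t^\r-a^\r)^{1-\a}$, and therefore the pointwise estimate
$$\left|\LD x(t)\right|\leq \frac{\r^{\a-1}}{\Gamma(2-\a)}(t^\r-a^\r)^{1-\a}\,\max_{\t\in[a,b]}|\t^{1-\r}x'(\t)|,$$
which is precisely the inequality recorded in the proof of Theorem~\ref{teo:bounds}.

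To turn this into the operator bound I would apply three uniform estimates. Since $1-\a>0$ and $t\le b$, replace $(t^\r-a^\r)^{1-\a}$ by $(b^\r-a^\r)^{1-\a}$. Next, factor $|\t^{1-\r}x'(\t)|\le \t^{1-\r}\max_{\t}|x'(\t)|$ and note that $\t\mapsto\t^{1-\r}$ is monotone on $[a,b]$ (increasing if $\r<1$, decreasing if $\r>1$, constant if $\r=1$), so its maximum over $[a,b]$ is attained at an endpoint and equals $\max\{a^{1-\r},b^{1-\r}\}$. Finally bound $\max_{\t}|x'(\t)|\le\|x\|_{C^1}$. Multiplying these factors gives $|\LD x(t)|\le M\|x\|_{C^1}$ for every $t$, and taking the maximum over $t$ produces $\|\LD x\|_{C}\le M\|x\|_{C^1}$. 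The right-hand derivative is treated identically, starting from the symmetric representation in Theorem~\ref{teo:equiv} and substituting $u=\t^\r$ on $[t,b]$.

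The only delicate point is the handling of the weight $\t^{1-\r}$: it must be separated from $x'$ before maximizing, and one must recognize that the sign of $1-\r$ dictates at which endpoint the maximum of $\t^{1-\r}$ is reached, which is exactly what the factor $\max\{a^{1-\r},b^{1-\r}\}$ in the constant $M$ encodes. All remaining steps are the elementary integral evaluation already performed for Theorem~\ref{teo:bounds}.
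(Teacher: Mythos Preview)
Your proof is correct and follows essentially the same route as the paper's: the paper simply cites the identity $\LD x = I_{a+}^{1-\a,\r}\bigl(\t^{1-\r}x'\bigr)$ together with the operator bound for $I_{a+}^{1-\a,\r}$ from Theorem~\ref{lem:contInt}, which is exactly the computation you carry out by hand (and which is also the estimate you invoke from the proof of Theorem~\ref{teo:bounds}). Your explicit handling of the endpoint maximum of $\t^{1-\r}$ is a nice clarification of where the factor $\max\{a^{1-\r},b^{1-\r}\}$ originates.
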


\begin{proof} It is an immediate consequence of \eqref{Derivative_Integral} and Theorem~\ref{lem:contInt}.
\end{proof}

Under appropriate assumptions, the Caputo--Katugampola fractional derivatives and Katugampola fractional integrals are inverse operations of each other. In fact, we have the following.

\begin{theorem}\label{thm:DerInt} Let $x\in C([a,b]).$ Then,
$$\LD {I_{a+}^{\a,\r}} x(t)=x(t).$$
\end{theorem}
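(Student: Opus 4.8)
The plan is to prove the identity by a direct computation that collapses the composite operator onto a single integral to which the fundamental theorem of calculus applies. Set $y(t):=\LI x(t)$. First I would record that $y(a)=0$: this is immediate from the integral over the degenerate interval $[a,a]$, and it is confirmed by the pointwise bound $|y(t)|\le \frac{\r^{-\a}}{\Gamma(\a+1)}(t^\r-a^\r)^\a\|x\|_C$ obtained exactly as in the proof of Theorem~\ref{lem:contInt}, which also gives continuity of $y$ at $t=a$. Because $y(a)=0$, the boundary correction in Definition~\ref{CKFD} drops out and
$$\LD y(t)=\frac{\r^\a}{\Gamma(1-\a)}\,t^{1-\r}\frac{d}{dt}\int_a^t\frac{\t^{\r-1}}{(t^\r-\t^\r)^\a}\,y(\t)\,d\t.$$
I emphasise that I deliberately use this form, with the derivative outside the integral, rather than the formula of Theorem~\ref{teo:equiv}, because for a merely continuous $x$ the function $y$ need not belong to $C^1([a,b])$.

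Next I would substitute the definition of $y(\t)$, obtaining an iterated integral over the triangle $a\le s\le\t\le t$, and interchange the order of integration by Fubini's theorem:
$$\int_a^t\frac{\t^{\r-1}}{(t^\r-\t^\r)^\a}\,y(\t)\,d\t=\frac{\r^{1-\a}}{\Gamma(\a)}\int_a^t s^{\r-1}x(s)\left[\int_s^t\frac{\t^{\r-1}}{(t^\r-\t^\r)^\a(\t^\r-s^\r)^{1-\a}}\,d\t\right]ds.$$
The inner integral is evaluated by the substitution $\t^\r=s^\r+(t^\r-s^\r)v$, $v\in[0,1]$, which reduces it to $\r^{-1}\int_0^1(1-v)^{-\a}v^{\a-1}\,dv=\r^{-1}B(\a,1-\a)=\r^{-1}\Gamma(\a)\Gamma(1-\a)$. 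The factor $\Gamma(\a)$ and the $\r$-powers then cancel, leaving
$$\int_a^t\frac{\t^{\r-1}}{(t^\r-\t^\r)^\a}\,y(\t)\,d\t=\r^{-\a}\Gamma(1-\a)\int_a^t s^{\r-1}x(s)\,ds.$$

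Finally I would substitute this back and differentiate. Since $x$ is continuous, $t\mapsto\int_a^t s^{\r-1}x(s)\,ds$ is of class $C^1$, so the fundamental theorem of calculus gives $\frac{d}{dt}\int_a^t s^{\r-1}x(s)\,ds=t^{\r-1}x(t)$; after the prefactors cancel this yields $\LD y(t)=t^{1-\r}\cdot t^{\r-1}x(t)=x(t)$, as claimed. (As a by-product, the $C^1$ reduction justifies that the outer derivative, hence $\LD y$ itself, exists.)

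The step I expect to require the most care is the interchange of integration order together with the Beta-function evaluation: one must check that the double integral is absolutely convergent despite the two singularities $\t\to t^-$ and $\t\to s^+$, which holds because the exponents $-\a$ and $-(1-\a)$ both exceed $-1$, and then carry out the reduction to $B(\a,1-\a)$ without sign or exponent slips. A shorter route would invoke the semigroup law $I_{a+}^{1-\a,\r}I_{a+}^{\a,\r}=I_{a+}^{1,\r}$ for Katugampola integrals together with the factorisation ${D_{a+}^{\a,\r}}=t^{1-\r}\frac{d}{dt}\,I_{a+}^{1-\a,\r}$, but this merely relocates the essential computation into the proof of the index law, so I would present the self-contained version above.
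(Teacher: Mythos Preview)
Your proof is correct, but it takes a different route from the paper. Both arguments begin with the same observation, namely that $y:=\LI x$ satisfies $y(a)=0$, so that the Caputo--Katugampola derivative of $y$ coincides with the Katugampola derivative $D_{a+}^{\a,\r}y$. From there the paper simply invokes the known inversion identity $D_{a+}^{\a,\r}\LI x=x$ for the Riemann--Liouville--type operators (Theorem~3.2 of \cite{Katugampola2}) and is done. You instead unfold that identity and prove it from scratch: substituting the definition of $y$, interchanging the order of integration, reducing the inner $\t$-integral to $B(\a,1-\a)$ via the change of variables $\t^\r=s^\r+(t^\r-s^\r)v$, and then applying the fundamental theorem of calculus. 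Your route is longer but fully self-contained and makes explicit why only continuity of $x$ is needed (the outer derivative lands on a $C^1$ function); the paper's route is shorter but outsources the essential computation to the reference. You yourself flag this tradeoff in your last paragraph, and indeed the ``shorter route'' you describe there is exactly what the paper does.
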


\begin{proof} Using formula \eqref{eq:derivat}, by Theorem 3.2 in \cite{Katugampola2}, we have
$$\begin{array}{ll}
\DS \LD {I_{a+}^{\a,\r}} x(t)&=\DS {D_{a+}^{\a,\r}} {I_{a+}^{\a,\r}} x(t)-\frac{\r^\a}{\Gamma(1-\a)}(t^\r-a^\r)^{-\a}\left[{I_{a+}^{\a,\r}} x(t)\right]|_{t=a}\\
&=\DS  x(t)-\frac{\r^\a}{\Gamma(1-\a)}(t^\r-a^\r)^{-\a}\left[{I_{a+}^{\a,\r}} x(t)\right]|_{t=a}.\\
\end{array}$$
On the other hand, since $x$ is a continuous function, there exists $M=\max_{t\in[a,b]}|x(t)|$. Then,
$$\begin{array}{ll}
\DS \left|{I_{a+}^{\a,\r}} x(t)\right|&\DS\leq M\frac{\r^{1-\a}}{\Gamma(\alpha)}\int_a^t \frac{\t^{\r-1}}{(t^\r-\t^\r)^{1-\a}} d\tau\\
&\DS =M\frac{\r^{-\a}}{\Gamma(1+\alpha)}(t^\r-a^\r)^{\a},\\
\end{array}$$
and ${I_{a+}^{\a,\r}} x(t)=0$ at $t=a$. Thus, $\LD {I_{a+}^{\a,\r}} x(t)=x(t)$.
\end{proof}

\begin{theorem}\label{teo:ID} Let $x\in C^1([a,b]).$ Then,
$${I_{a+}^{\a,\r}}\, \LD x(t)=x(t)-x(a).$$
\end{theorem}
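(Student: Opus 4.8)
The plan is to turn the composition $\LI\LD x$ into an elementary integral of $x'$. By the Remark after Theorem~\ref{teo:equiv}, formula \eqref{Derivative_Integral} writes the Caputo--Katugampola derivative as a Katugampola integral, $\LD x(t)=I_{a+}^{1-\a,\r}\!\left(\t^{1-\r}x'\right)(t)$. Applying $\LI$ to both sides, I would obtain
\begin{equation*}
\LI\,\LD x(t)=\LI\, I_{a+}^{1-\a,\r}\!\left(\t^{1-\r}x'\right)(t),
\end{equation*}
so that the whole statement reduces to understanding the composition of two Katugampola integrals of orders $\a$ and $1-\a$.

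First I would invoke the index (semigroup) law for the Katugampola fractional integral, $\LI\, I_{a+}^{1-\a,\r}=I_{a+}^{1,\r}$, which holds for continuous integrands and follows from the results in \cite{Katugampola1}. This collapses the two integrals into a single one of order $1$. Next I would compute $I_{a+}^{1,\r}$ straight from the definition: at order $1$ the kernel singularity disappears and the normalizing constant equals $1$, giving $I_{a+}^{1,\r}z(t)=\int_a^t\t^{\r-1}z(\t)\,d\t$. Taking $z(\t)=\t^{1-\r}x'(\t)$, the factors $\t^{\r-1}\t^{1-\r}$ cancel, and the fundamental theorem of calculus (this is where $x\in C^1([a,b])$ is used) yields $\int_a^t x'(\t)\,d\t=x(t)-x(a)$, as claimed.

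If one prefers to avoid citing the semigroup law, the same conclusion can be reached by a direct computation. Using the single-integral form of $\LD x$ from Theorem~\ref{teo:equiv} together with the definition of $\LI$, I would write $\LI\,\LD x(t)$ as a double integral over the triangle $a\le\t\le s\le t$ and exchange the order of integration by Fubini's theorem. The exchange is legitimate because the two kernel exponents $1-\a$ and $\a$ both lie in $(0,1)$, so the singularities at $s=t$ and $s=\t$ are integrable, while $x'$ is bounded by continuity on $[a,b]$. After the swap the inner integral is
\begin{equation*}
\int_\t^t s^{\r-1}(t^\r-s^\r)^{\a-1}(s^\r-\t^\r)^{-\a}\,ds,
\end{equation*}
and the substitutions $u=s^\r$ and then $u=\t^\r+(t^\r-\t^\r)w$ turn it into $\r^{-1}B(\a,1-\a)=\r^{-1}\Gamma(\a)\Gamma(1-\a)$, independent of $\t$ and $t$.

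Either way, the Gamma factors produced by the inner integral cancel exactly against the normalizing constants $\r^{1-\a}/\Gamma(\a)$ and $\r^{\a}/\Gamma(1-\a)$, leaving $\int_a^t x'(\t)\,d\t=x(t)-x(a)$. I expect the main obstacle to be the middle step --- either locating and citing the semigroup identity in the right form, or, in the direct route, justifying Fubini and recognizing the inner integral as a Beta function; the surrounding bookkeeping of constants is routine.
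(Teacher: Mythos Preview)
Your proposal is correct and follows essentially the same route as the paper: write $\LD x=I_{a+}^{1-\a,\r}(\t^{1-\r}x')$ via \eqref{Derivative_Integral}, apply the semigroup law from \cite{Katugampola1} to obtain $I_{a+}^{1,\r}(\t^{1-\r}x')$, and then compute this order-$1$ integral directly to get $x(t)-x(a)$. Your alternative Fubini/Beta computation is just an explicit verification of the semigroup identity in this special case and is not needed, but it is not wrong either.
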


\begin{proof} By Theorem 4.1 in \cite{Katugampola1} and by the definition of the Katugampola fractional integral, we have
$${I_{a+}^{\a,\r}}\, \LD x(t)={I_{a+}^{\a,\r}}{I_{a+}^{1-\a,\r}}(t^{1-\r}x'(t))={I_{a+}^{1,\r}}(t^{1-\r}x'(t))$$
$$=\int_a^t \t^{\r-1}\t^{1-\r}x'(\t)d\t=x(t)-x(a).$$
\end{proof}

\begin{lemma}\label{lemma:power} For $v>0$, define
$$x(t)=\left(\frac{t^\r-a^\r}{\r}\right)^v \quad \mbox{and} \quad y(t)=\left(\frac{b^\r-t^\r}{\r}\right)^v.$$
Then,
$$\LD x(t)=\frac{\r^{\a-v}\Gamma(1+v)}{\Gamma(1-\a+v)}(t^\r-a^\r)^{v-\a}  \quad \mbox{and} \quad
\RD y(t)=\frac{\r^{\a-v}\Gamma(1+v)}{\Gamma(1-\a+v)}(b^\r-t^\r)^{v-\a}.$$
\end{lemma}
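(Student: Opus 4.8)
The plan is to compute $\LD x$ directly, exploiting the fact that the Katugampola kernel is turned into a classical Riemann--Liouville power kernel by the substitution $u=\t^\r$, after which everything reduces to an Euler Beta integral followed by a Gamma-function simplification. I will treat the left derivative in detail; the right one is entirely symmetric, with $(b^\r-t^\r)$ playing the role of $(t^\r-a^\r)$.

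First, since $x(a)=0$, Definition~\ref{CKFD} gives $\LD x(t)=\frac{\r^\a}{\Gamma(1-\a)}t^{1-\r}\frac{d}{dt}\int_a^t \t^{\r-1}(t^\r-\t^\r)^{-\a}\,\r^{-v}(\t^\r-a^\r)^v\,d\t$. I prefer to start from the definition rather than from Theorem~\ref{teo:equiv}, because for $0<v<1$ the function $x$ fails to be $C^1$ at $t=a$. I would then evaluate the inner integral by the substitution $u=\t^\r$, which sends it to $\r^{-v-1}\int_{a^\r}^{t^\r}(t^\r-u)^{-\a}(u-a^\r)^v\,du$, and a further affine substitution $s=(u-a^\r)/(t^\r-a^\r)$, producing $\r^{-v-1}(t^\r-a^\r)^{v-\a+1}B(v+1,1-\a)$, where $B$ denotes the Beta function.

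Next, I would carry out the outer differentiation. Using $\frac{d}{dt}(t^\r-a^\r)^{v-\a+1}=(v-\a+1)\r\,t^{\r-1}(t^\r-a^\r)^{v-\a}$, the factor $t^{1-\r}$ cancels against $t^{\r-1}$, the powers of $\r$ collect to $\r^{\a-v}$, and the identity $B(v+1,1-\a)=\Gamma(v+1)\Gamma(1-\a)/\Gamma(v+2-\a)$ cancels the prefactor $1/\Gamma(1-\a)$. Finally, the Gamma-recursion $(v-\a+1)/\Gamma(v+2-\a)=1/\Gamma(v+1-\a)$ collapses everything to $\r^{\a-v}\Gamma(1+v)/\Gamma(1-\a+v)\,(t^\r-a^\r)^{v-\a}$, which is exactly the claimed expression for $\LD x(t)$.

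The computation itself is routine once the right variables are chosen; the only point requiring care is the justification of these manipulations near $t=a$ when $0<v<1$, where $x\notin C^1([a,b])$. I would address this by noting that the inner integrand $\t^{\r-1}(t^\r-\t^\r)^{-\a}(\t^\r-a^\r)^v$ has no singularity at $\t=a$ (since $v>0$) and only an integrable singularity $(t^\r-\t^\r)^{-\a}$ at $\t=t$ (since $\a\in(0,1)$), so the inner integral is a well-defined, differentiable function of $t$ and the differentiation prescribed by the definition is legitimate. This is precisely why I work from Definition~\ref{CKFD} rather than from the $C^1$-formula of Theorem~\ref{teo:equiv}, and it is the main (though mild) obstacle in the argument.
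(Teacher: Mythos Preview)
Your argument is correct and closely parallels the paper's in spirit (both reduce the integral to an Euler Beta function via the substitution carrying $\t^\r$ to an affine variable), but differs at the starting point: the paper invokes Theorem~\ref{teo:equiv}, writing $\LD x(t)=\frac{\r^\a}{\Gamma(1-\a)}\int_a^t (t^\r-\t^\r)^{-\a}x'(\t)\,d\t$ and then substituting $u=(\t^\r-a^\r)/(t^\r-a^\r)$ directly, whereas you work from Definition~\ref{CKFD} and differentiate at the end. Your choice is better motivated: for $0<v<1$ the function $x$ fails to lie in $C^1([a,b])$ (its derivative blows up at $t=a$), so the hypothesis of Theorem~\ref{teo:equiv} is not met and the paper's proof has a gap in that range. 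Your route, by contrast, only needs the inner integral to be finite and differentiable in $t$, which you justify. The cost is a slightly longer computation (two substitutions, then an outer differentiation and a Gamma recursion), but the gain is a proof that genuinely covers all $v>0$ as stated in the Lemma.
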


\begin{proof} By Theorem \ref{teo:equiv},
$$\LD x(t) =\frac{\r^\a}{\Gamma(1-\alpha)}\int_a^t (t^\r-\t^\r)^{-\a}v\t^{\r-1}\left(\frac{\t^\r-a^\r}{\r}\right)^{v-1} d\tau.$$
Performing the change of variables $u=(\t^\r-a^\r)/(t^\r-a^\r)$, and using the Beta function $B(\cdot,\cdot)$, we get
$$\begin{array}{ll}
\DS\LD x(t) &=\DS \frac{\r^{\a-v}v}{\Gamma(1-\alpha)}(t^\r-a^\r)^{v-\a}\int_0^1 (1-u)^{-\a}u^{v-1} du\\
&=\DS \frac{\r^{\a-v}v}{\Gamma(1-\alpha)}(t^\r-a^\r)^{v-\a}B(1-\a,v)\\
&=\DS \frac{\r^{\a-v}v}{\Gamma(1-\alpha)}(t^\r-a^\r)^{v-\a}\frac{\Gamma(1-\a)\Gamma(v)}{\Gamma(1-\a+v)}\\
&=\DS \frac{\r^{\a-v}\Gamma(1+v)}{\Gamma(1-\a+v)}(t^\r-a^\r)^{v-\a}.
\end{array}$$
The second formula can be proven in a similar way.
\end{proof}

\begin{remark} If $\r=1$, then we obtain the power function $x(t)=(t-a)^v$, and as $\r\to0^+$, we  get $x(t)=(\ln(t/a))^v$. The expressions obtained in Lemma \ref{lemma:power} coincide with the ones given in Property~2.16 \cite{Kilbas} and Property~2.6 \cite{Baleanu2}.
\end{remark}

Let us recall the definition of the Mittag--Leffler function.

\begin{definition} The Mittag--Leffler function with dependence on two parameters $\mu$ and $\beta$ is defined by the series
$$E_{\mu,\beta}(x)=\sum_{k=0}^\infty\frac{x^k}{\Gamma(\mu k+\beta)},\quad \mu>0 \, \mbox{and} \, \beta>0.$$
where $\mu, \beta \in \C$, $Re(\mu)>0$, $Re(\beta)>0$. When $\mu, \beta \in \R$ and $\mu, \beta>0$, the series is convergent.
As a particular case, when $\beta=1$, we obtain
$$E_{\mu}(x)=\sum_{k=0}^\infty\frac{x^k}{\Gamma(\mu k+1)},\quad \mu>0.$$
\end{definition}

When $\mu=1$, the Mittag--Leffler function reduces to the exponential function, $E_{1}(x)=\exp(x)$.

\begin{lemma}\label{lemma:exp} For $\mu>0$, $\beta\geq1$ and $\lambda\in\mathbb R$, we have
$$\DS\LD \left((t^\r-a^\r)^{\beta-1}E_{\mu,\beta}\left[\lambda(t^\r-a^\r)^\mu\right]\right)=\DS\left\{
\begin{array}{ll}
\DS\r^\a(t^\r-a^\r)^{\beta-\a-1}E_{\mu,\beta-\a}\left[\lambda(t^\r-a^\r)^\mu\right]& \mbox{ if } \quad \beta>1\\
\DS\lambda\r^\a(t^\r-a^\r)^{\mu-\a}E_{\mu,\mu-\a+1}\left[\lambda(t^\r-a^\r)^\mu\right]& \mbox{ if } \quad \beta=1\\
\end{array} \right. $$
and
$$
\DS\RD \left((b^\r-t^\r)^{\beta-1}E_{\mu,\beta}\left[\lambda(b^\r-t^\r)^\mu\right]\right)=\DS\left\{
\begin{array}{ll}
\DS\r^\a(b^\r-t^\r)^{\beta-\a-1}E_{\mu,\beta-\a}\left[\lambda(b^\r-t^\r)^\mu\right]& \mbox{ if } \, \beta>1\\
\DS\lambda\r^\a(b^\r-t^\r)^{\mu-\a}E_{\mu,\mu-\a+1}\left[\lambda(b^\r-t^\r)^\mu\right]& \mbox{ if } \, \beta=1\\
\end{array} \right.. $$
\end{lemma}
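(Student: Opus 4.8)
The plan is to expand the Mittag--Leffler function into its defining power series and apply $\LD$ to each monomial through Lemma~\ref{lemma:power}, then resum. The series is
$$(t^\r-a^\r)^{\beta-1}E_{\mu,\beta}\left[\lambda(t^\r-a^\r)^\mu\right]=\sum_{k=0}^\infty\frac{\lambda^k}{\Gamma(\mu k+\beta)}(t^\r-a^\r)^{\mu k+\beta-1},$$
a superposition of power functions $(t^\r-a^\r)^v$ with $v=\mu k+\beta-1$. First I would record the rescaled form of Lemma~\ref{lemma:power}: multiplying the stated identity by $\r^v$ gives, for every $v>0$,
$$\LD(t^\r-a^\r)^v=\frac{\r^\a\,\Gamma(1+v)}{\Gamma(1-\a+v)}(t^\r-a^\r)^{v-\a}.$$
Applying this with $v=\mu k+\beta-1$, the factor $\Gamma(1+v)=\Gamma(\mu k+\beta)$ cancels the coefficient's denominator and $\Gamma(1-\a+v)=\Gamma(\mu k+\beta-\a)$, so the $k$-th summand is sent to $\lambda^k\r^\a\Gamma(\mu k+\beta-\a)^{-1}(t^\r-a^\r)^{\mu k+\beta-1-\a}$.

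When $\beta>1$ every exponent $v=\mu k+\beta-1$ is strictly positive, so the rule applies for all $k\geq0$ and the resulting series is exactly $\r^\a(t^\r-a^\r)^{\beta-\a-1}E_{\mu,\beta-\a}\left[\lambda(t^\r-a^\r)^\mu\right]$, the first branch. When $\beta=1$ the term $k=0$ is the constant $1$, whose Caputo--Katugampola derivative is zero by Definition~\ref{CKFD} (the operator subtracts the value at $t=a$); the surviving series starts at $k=1$, where $v=\mu k>0$, and after the index shift $j=k-1$ it collapses to $\lambda\r^\a(t^\r-a^\r)^{\mu-\a}E_{\mu,\mu-\a+1}\left[\lambda(t^\r-a^\r)^\mu\right]$, the second branch.

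The step demanding real care, and the main obstacle, is justifying the interchange of $\LD$ with the infinite sum. This is delicate because the function is in general not of class $C^1$ at $t=a$ (the lowest surviving exponent may lie in $(0,1)$) and its Caputo--Katugampola derivative may even be unbounded there when $\beta-\a-1<0$, so one cannot simply differentiate term by term and appeal to Theorem~\ref{teo:equiv}. I would instead argue through the integral representation $\LD x=t^{1-\r}\frac{d}{dt}{I_{a+}^{1-\a,\r}}[x-x(a)]$ coming from the Katugampola derivative relation. For the partial sums $g_N$ --- finite combinations of power functions covered directly by Lemma~\ref{lemma:power} --- one has $\LD g_N=t^{1-\r}h_N'$ with $h_N={I_{a+}^{1-\a,\r}}[g_N-g_N(a)]$. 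Since $E_{\mu,\beta}$ is entire and $\lambda(t^\r-a^\r)^\mu$ stays in a fixed bounded set for $t\in[a,b]$, the series for the function converges uniformly, so by the boundedness of the Katugampola integral (Theorem~\ref{lem:contInt}, with order $1-\a$) the functions $h_N$ converge uniformly on $[a,b]$ to $h={I_{a+}^{1-\a,\r}}[f-f(a)]$; crucially $h$ is itself a uniformly convergent series of power functions with the strictly positive exponents $\mu k+\beta-\a$. On each compact subinterval of $(a,b]$ the termwise-differentiated series for $h$ converges uniformly, whence $h'=\lim_N h_N'$ there, and multiplying by $t^{1-\r}$ yields $\LD f=\lim_N\LD g_N$, the claimed formula. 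The statements for $\RD$ follow identically, using the right-hand branch of Lemma~\ref{lemma:power} and the representation of $\RD$ via $\RI$.
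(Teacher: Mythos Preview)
Your proposal is correct and follows essentially the same route as the paper: expand the Mittag--Leffler series, apply Lemma~\ref{lemma:power} termwise (with the rescaling you note), and resum, treating the constant term separately when $\beta=1$. The only difference is that you supply an argument for interchanging $\LD$ with the infinite sum via the representation $\LD x = t^{1-\r}\frac{d}{dt}I_{a+}^{1-\a,\r}[x-x(a)]$ and uniform convergence, whereas the paper performs the termwise computation without any such justification.
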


\begin{proof} Let $\beta>1$. Using Lemma \ref{lemma:power}, we have the following
$$\begin{array}{ll}
\DS\LD \left((t^\r-a^\r)^{\beta-1}E_{\mu,\beta}\left[\lambda(t^\r-a^\r)^\mu\right]\right)&
 = \DS\sum_{k=0}^\infty\frac{\lambda^k}{\Gamma(\mu k+\beta)}\LD (t^\r-a^\r)^{\mu k+\beta-1}\\
& = \DS\sum_{k=0}^\infty\frac{\lambda^k}{\Gamma(\mu k+\beta)}\frac{\r^\a\Gamma(\mu k+\beta)}{\Gamma(\mu k+\beta-\a)}(t^\r-a^\r)^{\mu k+\beta-\a-1}\\
&=\DS\r^\a(t^\r-a^\r)^{\beta-\a-1}E_{\mu,\beta-\a}\left[\lambda(t^\r-a^\r)^\mu\right].\\
\end{array}$$
For $\beta=1$, we have that
$$\begin{array}{ll}
\DS\LD \left(E_{\mu,1}\left[\lambda(t^\r-a^\r)^\mu\right]\right)&
 = \DS\LD 1+\sum_{k=1}^\infty\frac{\lambda^k}{\Gamma(\mu k+1)}\LD (t^\r-a^\r)^{\mu k}\\
& = \DS\sum_{k=1}^\infty\frac{\lambda^k}{\Gamma(\mu k+1)}\frac{\r^\a\Gamma(\mu k+1)}{\Gamma(\mu k-\a+1)}(t^\r-a^\r)^{\mu k-\a}\\
&=\DS\lambda\r^\a(t^\r-a^\r)^{\mu-\a}\sum_{k=0}^\infty\frac{\lambda^k}{\Gamma(\mu k+\mu-\a+1)}(t^\r-a^\r)^{\mu k}\\
&=\DS\lambda\r^\a(t^\r-a^\r)^{\mu-\a}E_{\mu,\mu-\a+1}\left[\lambda(t^\r-a^\r)^\mu\right].
\end{array}$$
The formula for the right fractional derivatives can be proven in a similar way.
\end{proof}

In particular, setting $\mu=\beta=1$, we get the formulas for the fractional derivative of the exponential functions:
$$\LD \left(\exp\left[\lambda(t^\r-a^\r)\right]\right)=\lambda\r^\a(t^\r-a^\r)^{1-\a}E_{1,2-\a}\left[\lambda(t^\r-a^\r)\right]$$
and
$$\RD \left(\exp\left[\lambda(b^\r-t^\r)\right]\right)=\lambda\r^\a(b^\r-t^\r)^{1-\a}E_{1,2-\a}\left[\lambda(b^\r-t^\r)\right].$$
Observe that, for $\r=1$ and $\a\to1^-$, we obtain the usual derivative of the exponential function:
$$\frac{d}{dt} \left(\exp\left[\lambda(t-a)\right]\right)=\lambda\exp\left[\lambda(t-a)\right].$$
For $\alpha=\mu>0$ and $\lambda\in\mathbb R$, we have
$$\DS\LD \left(E_\a\left[\lambda(t^\r-a^\r)^\a\right]\right)=\lambda\r^\a E_\a\left[\lambda(t^\r-a^\r)^\a\right]$$
and
$$\RD \left(E_\a\left[\lambda(b^\r-t^\r)^\a\right]\right)=\lambda\r^\a E_\a\left[\lambda(b^\r-t^\r)^\a\right].$$

\section{Caputo--Katugampola fractional Cauchy problem}\label{sec:FDE}

Consider the nonlinear fractional differential equation of order $\a\in(0,1)$,
\begin{equation}\label{eq:CauchyEq}
\LD x(t)=f(t,x(t)), \quad t\in[a,b],
\end{equation}
with the initial condition
\begin{equation}\label{eq:BoundCond}
x(a)=x_a, \quad x_a\in\mathbb R,
\end{equation}
where $f:[a,b]\times\mathbb R\to \mathbb R$ is a continuous function with respect to all its arguments.
We seek conditions that guarantee the existence and uniqueness of solution to problem \eqref{eq:CauchyEq}--\eqref{eq:BoundCond}
in the following set of functions
$$\mathcal{U}:=\left\{x\in C([a,b]):\LD x\in C([a,b])\right\}.$$

First, let us observe that, for $x\in C([a,b])$, the Cauchy type problem \eqref{eq:CauchyEq}--\eqref{eq:BoundCond} is equivalent to the problem
of finding solutions to the following Volterra integral equation
\begin{equation}\label{eq:Volterra}
x(t)=x_a+\frac{\rho^{1-\alpha}}{\Gamma(\alpha)}\int_a^t\frac{\tau^{\rho-1}}{(t^{\rho}-\tau^{\rho})^{1-\alpha}}f(\t,x(\t))\; d\tau.
\end{equation}

Indeed, if $x\in C([a,b])$ satisfies \eqref{eq:CauchyEq}--\eqref{eq:BoundCond}, then applying operator $\LI$ to the both
sides of \eqref{eq:CauchyEq}, using the initial condition \eqref{eq:BoundCond} and applying Theorem~\ref{teo:ID}
we obtain equation \eqref{eq:Volterra}. Conversely, taking { $t\rightarrow a^+$}, we see that the initial condition
\eqref{eq:BoundCond} is satisfied and applying operator $\LD$ to both sides of \eqref{eq:Volterra}, by
Theorem~\ref{thm:DerInt}, we arrive to problem \eqref{eq:CauchyEq}--\eqref{eq:BoundCond}.

Now we will use the above results to prove the following theorem, ensuring the existence and uniqueness of solution
to the Cauchy type problem \eqref{eq:CauchyEq}--\eqref{eq:BoundCond}.

\begin{theorem}
Let $f:[a,b]\times\mathbb R\to \mathbb R$ be a continuous function and Lipschitz with respect to the second variable, i.e.,
\begin{equation}\label{eq:LipType}
\left|f(t,x_1)-f(t,x_2)\right|<L\left|x_1-x_2\right|,~(L>0),
\end{equation}
for all $t\in [a,b]$ and all $x_1,x_2\in\mathbb{R}$. Then, the Cauchy type problem \eqref{eq:CauchyEq}--\eqref{eq:BoundCond}
possesses a unique solution in the space $\mathcal{U}$.
\end{theorem}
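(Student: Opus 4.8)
The plan is to recast \eqref{eq:CauchyEq}--\eqref{eq:BoundCond} as a fixed-point equation and apply a contraction argument. By the equivalence established just before the statement, it suffices to produce a unique $x\in C([a,b])$ solving the Volterra equation \eqref{eq:Volterra}; such an $x$ automatically satisfies $\LD x=f(\cdot,x(\cdot))\in C([a,b])$, hence lies in $\mathcal{U}$. Accordingly, I would introduce the operator $T:C([a,b])\to C([a,b])$,
$$Tx(t)=x_a+\LI\bigl[f(\cdot,x(\cdot))\bigr](t),$$
and first check that it is well-defined: for $x\in C([a,b])$ the map $\t\mapsto f(\t,x(\t))$ is continuous, and Theorem~\ref{lem:contInt} ensures its Katugampola integral is again continuous, so $Tx\in C([a,b])$. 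A fixed point of $T$ is exactly a solution of \eqref{eq:Volterra}.

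The naive attempt is to show $T$ is itself a contraction. Using the Lipschitz condition \eqref{eq:LipType} together with the positivity of the integral operator and the bound of Theorem~\ref{lem:contInt},
$$\|Tx_1-Tx_2\|_C\leq L\,\bigl\|\LI[\,|x_1-x_2|\,]\bigr\|_C\leq L\,K_{\a,\r}\,\|x_1-x_2\|_C.$$
The difficulty, which I expect to be the main obstacle, is that the constant $L K_{\a,\r}$ need not be smaller than $1$, so a single application of the Banach fixed-point theorem may fail. The remedy is to estimate the iterates $T^n$ instead of $T$.

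The key ingredient is the semigroup property of the Katugampola integral, $\LI\circ I_{a+}^{\beta,\r}=I_{a+}^{\a+\beta,\r}$ for continuous functions (available from Theorem~4.1 in \cite{Katugampola1}, already used in the proof of Theorem~\ref{teo:ID}), together with the elementary identity $I_{a+}^{\gamma,\r}[1](t)=\frac{\r^{-\gamma}}{\Gamma(\gamma+1)}(t^\r-a^\r)^\gamma$. Iterating the Lipschitz estimate and using monotonicity of the integral, an induction yields
$$|T^nx_1(t)-T^nx_2(t)|\leq L^n\,I_{a+}^{n\a,\r}[\,|x_1-x_2|\,](t)\leq L^n\,\|x_1-x_2\|_C\,I_{a+}^{n\a,\r}[1](t),$$
and hence
$$\|T^nx_1-T^nx_2\|_C\leq\frac{\bigl(L\,\r^{-\a}(b^\r-a^\r)^\a\bigr)^n}{\Gamma(n\a+1)}\,\|x_1-x_2\|_C.$$
Since $\Gamma(n\a+1)$ grows faster than any geometric sequence, the coefficient tends to $0$ as $n\to\infty$, so $T^n$ is a contraction for all sufficiently large $n$.

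To conclude, I would invoke the generalized contraction principle (for which it suffices that some iterate $T^n$ be a contraction): $T$ then has a unique fixed point $x\in C([a,b])$. This $x$ solves \eqref{eq:Volterra}, hence \eqref{eq:CauchyEq}--\eqref{eq:BoundCond}, and because $\LD x=f(\cdot,x(\cdot))$ is continuous it belongs to $\mathcal{U}$, which establishes both existence and uniqueness in $\mathcal{U}$. As an alternative avoiding the iterate estimate, one could partition $[a,b]$ into finitely many subintervals short enough that the local analogue of $L K_{\a,\r}$ is below $1$, solve successively by the ordinary Banach principle, and glue the solutions; the iterate bound above is, however, the cleaner route.
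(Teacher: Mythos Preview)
Your proof is correct, but it follows a different route from the paper. The paper proceeds exactly via the alternative you mention at the end: it partitions $[a,b]$ into finitely many subintervals $[a,t_1],[t_1,t_2],\dots,[t_{l-1},b]$ chosen so that on each piece the local contraction constant $L\rho^{-\alpha}(t_k^{\rho}-t_{k-1}^{\rho})^{\alpha}/\Gamma(\alpha+1)$ is strictly below $1$, applies the ordinary Banach fixed-point theorem on each subinterval in succession (absorbing the already-solved part into a known inhomogeneous term), and then patches the local solutions into a global one in $C([a,b])$. Membership in $\mathcal{U}$ is then argued via the Picard iterates $x_m$ and the Lipschitz bound, rather than by the one-line observation $\LD x=f(\cdot,x(\cdot))\in C([a,b])$ that you use.

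Your iterate estimate $\|T^n x_1-T^n x_2\|_C\le \dfrac{(L\rho^{-\alpha}(b^\rho-a^\rho)^\alpha)^n}{\Gamma(n\alpha+1)}\|x_1-x_2\|_C$, obtained from the semigroup property of $I_{a+}^{\alpha,\rho}$ and the formula for $I_{a+}^{n\alpha,\rho}[1]$, is the cleaner argument: it works globally on $[a,b]$ without any partitioning, and the passage to $\mathcal{U}$ is immediate. The paper's stepwise method, on the other hand, stays with the ordinary contraction principle and makes the successive-approximation construction more explicit, at the cost of the extra bookkeeping involved in splitting the integral and gluing. Both are standard strategies for Volterra-type equations; yours is shorter here.
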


\begin{proof}
The proof is similar in spirit to \cite{Kilbas2,Kilbas}. We start by showing that, for \eqref{eq:CauchyEq}--\eqref{eq:BoundCond}, there exists a unique
solution $x\in C([a,b])$. Let us recall that
the Cauchy type problem \eqref{eq:CauchyEq}--\eqref{eq:BoundCond} is equivalent to the problem
of finding solutions to the Volterra integral equation \eqref{eq:Volterra}.
This allows us to use the well known method, for nonlinear Volterra integral equations,
where first we prove existence and uniqueness of solutions on a subinterval of $[a,b]$.\\
Let us choose $a<t_1<b$ to be such that the following condition is satisfied
\begin{equation}\label{eq:t1}
0<L\frac{\rho^{-\alpha}}{\Gamma(\alpha+1)}(t_1^{\rho}-a^{\rho})^{\alpha}<1.
\end{equation}
We shall prove the existence of a unique solution $x\in C([a,t_1])$ to \eqref{eq:Volterra} on the subinterval $[a,t_1]\subset[a,b]$.
Let us define the following integral operator:
\begin{equation}
\fonction{\mathcal{S}}{C([a,b])}{C([a,b])}{x(t)}{\mathcal{S}x(t):=\displaystyle x_a+\frac{\rho^{1-\alpha}}{\Gamma(\alpha)}\int_a^t\frac{\tau^{\rho-1}}{(t^{\rho}-\tau^{\rho})^{1-\alpha}}f(\t,x(\t))\; d\tau,}
\end{equation}
and rewrite \eqref{eq:Volterra} in the form $x(t)=\mathcal{S}x(t)$. Note that $\mathcal{S}$ is well defined and is a  bounded operator on the complete metric space $C([a,b])$ (by Theorem~\ref{lem:contInt}).
Using Theorem~\ref{lem:contInt} and the condition \eqref{eq:LipType} one has
\begin{multline*}
\left\|\mathcal{S}x_1-\mathcal{S}x_2\right\|_{C([a,t_1])}
\leq\left\|\LI\left( f(t,x_1)-f(t,x_2)\right)\right\|_{C([a,t_1])}\\
\leq L\left\|\LI\left(x_1-x_2\right)\right\|_{C([a,t_1])}
\leq L\frac{\rho^{-\alpha}}{\Gamma(\alpha+1)}(t_1^{\rho}-a^{\rho})^{\alpha}\left\|x_1-x_2\right\|_{C([a,t_1])}
\end{multline*}
and because condition \eqref{eq:t1} is satisfied, by the Banach fixed point theorem, there exists a unique solution $x^{*1}\in C([a,t_1])$ to the equation \eqref{eq:Volterra}
on the interval $[a,t_1]$. Moreover, if we define the sequence $x_m^1(t):=\mathcal{S}^m x_0(t)$ where, for $x_0(t)=x_a$,
\begin{equation*}
\mathcal{S}^m x_0(t):=x_0(t)+\frac{\rho^{1-\alpha}}{\Gamma(\alpha)}\int_a^t\frac{\tau^{\rho-1}}{(t^{\rho}-\tau^{\rho})^{1-\alpha}}f(\t,\mathcal{S}^{m-1} x_0(\t))\; d\tau,~m=1,2,\dots,
\end{equation*}
then, again, by the Banach fixed point theorem, we obtain the solution $x^{*1}$ as a limit of the sequence $x_m^1$, i.e.,
\begin{equation*}
\lim\limits_{m\rightarrow\infty}\left\|x_m^1-x^{*1}\right\|_{C([a,t_1])}=0.
\end{equation*}
Now, let us choose $t_2=t_1+h_1$, with $h_1>0$ such that $t_2<b$ and
\begin{equation}\label{eq:t2}
0<L\frac{\rho^{-\alpha}}{\Gamma(\alpha+1)}(t_2^{\rho}-t_1^{\rho})^{\alpha}<1.
\end{equation}
Consider the interval $[t_1,t_2]$ and write equation \eqref{eq:Volterra} in the form
\begin{equation}\label{eq:Volterra2}
x(t)=x_0(t)+\frac{\rho^{1-\alpha}}{\Gamma(\alpha)}\int_{t_1}^t\frac{\tau^{\rho-1}}{(t^{\rho}-\tau^{\rho})^{1-\alpha}}f(\t,x(\t))\; d\tau
+\frac{\rho^{1-\alpha}}{\Gamma(\alpha)}\int_a^{t_1}\frac{\tau^{\rho-1}}{(t^{\rho}-\tau^{\rho})^{1-\alpha}}f(\t,x(\t))\; d\tau.
\end{equation}
Because on the interval $[a,t_1]$, equation \eqref{eq:Volterra2} possesses a unique solution, we can rewrite \eqref{eq:Volterra2}
as follows
\begin{equation*}\label{eq:Volterra3}
x(t)=x_{01}(t)+\frac{\rho^{1-\alpha}}{\Gamma(\alpha)}\int_{t_1}^{t}\frac{\tau^{\rho-1}}{(t^{\rho}-\tau^{\rho})^{1-\alpha}}f(\t,x(\t))\; d\tau,
\end{equation*}
where
\begin{equation*}
x_{01}(t)=x_{0}(t)+\frac{\rho^{1-\alpha}}{\Gamma(\alpha)}\int_{a}^{t_1}\frac{\tau^{\rho-1}}{(t^{\rho}-\tau^{\rho})^{1-\alpha}}f(\t,x(\t))\; d\tau
\end{equation*}
is the known function. By the same argument as before, we prove that there exists a unique solution $x^{*2}\in C([t_1,t_2])$ to equation \eqref{eq:Volterra}
on $[t_1,t_2]$. Repeating the previous reasoning, choosing $t_{k}=t_{k-1}+h_{k-1}$ such that $h_{k-1}>0$, $t_k<b$ and
\begin{equation}\label{eq:tk}
0<L\frac{\rho^{-\alpha}}{\Gamma(\alpha+1)}(t_{k}^{\rho}-t_{k-1}^{\rho})^{\alpha}<1,
\end{equation}
we see that equation \eqref{eq:Volterra} possesses a solution
$x^{*k}\in C([x_{k-1},x_k])$ on each interval $[x_{k-1},x_k]$ ($k=1,\dots,l$), where $a=x_0<x_1<\dots<x_l=b$ and we conclude that for problem \eqref{eq:CauchyEq}-\eqref{eq:BoundCond}, there exists a unique solution $x\in C([a,b])$.\\
It remains to prove that $x\in\mathcal{U}$, i.e., we need to show that $\LD x\in C([a,b])$. Recall that our solution $x$ can be approximated
by the sequence $x_m(t)=\mathcal{S}^m x_0(t)$, i.e.,
\begin{equation*}
\lim\limits_{m\rightarrow\infty}\left\|x_m-x\right\|_{C([a,b])}=0,
\end{equation*}
with the choice of certain $x_m$ on each interval $[a,t_1],\dots,[t_{l-1},b]$.
Using \eqref{eq:CauchyEq} and the Lipschitz type condition \eqref{eq:LipType} we have
\begin{equation*}
\left\|\LD x_m-\LD x\right\|_{C([a,b])} =\left\|f(t,x_m)-f(t,x)\right\|_{C([a,b])}
\leq L\left\|x_m-x\right\|_{C([a,b])},
\end{equation*}
then, taking $m\rightarrow\infty$, one has
\begin{equation*}
\lim\limits_{m\rightarrow\infty}\left\|\LD x_m-\LD x\right\|_{C([a,b])}=0.
\end{equation*}
Since $\LD x_m(t)=f(t,x_m(t))$ is continuous on $[a,b]$ we have that $\LD x$ belongs to the space $C([a,b])$. The proof is complete.
\end{proof}

\section{Numerical approach to the Caputo--Katugampola fractional Cauchy problem}\label{sec:approx}

In this section we prove an approximation formula for the Caputo--Katugampola fractional derivative, that depends only on the first-order derivative of a function $x$. With this tool in hand, for a given Caputo--Katugampola fractional Cauchy problem we can consider a sequence of systems of $N+1$ ordinary differential equations with $N+1$ initial conditions, where $N\in \N$. A sequence $(x_N)$ of the solutions to those systems converges to the solution of the fractional problem.

\begin{theorem}\label{teo:approx} Let $x:[a,b]\rightarrow\mathbb{R}$ be a function of class $C^2$, and $N \geq 1$ an integer. Define the quantities
$$\begin{array}{ll}
A_N&=\DS\frac{\r^{\a-1}}{\Gamma(2-\a)}\sum_{k=0}^N\frac{\Gamma(k-1+\a)}{\Gamma(\a-1)k!},\\
B_{N,k}&=\DS\frac{\r^\a\Gamma(k-1+\a)}{\Gamma(2-\a)\Gamma(\a-1)(k-1)!},\quad  k=1,\ldots,N,\\
\end{array}$$
and functions $V_k:[a,b]\rightarrow\mathbb{R}$ by
$$V_k(t)=\int_a^t (\t^\r-a^\r)^{k-1}x'(\t)d\t,\quad k=1,\ldots,N.$$
Then,
\begin{equation}\label{eq:mainformula}\LD x(t)=A_N(t^\r-a^\r)^{1-\a} t^{1-\r}x'(t)-\sum_{k=1}^N B_{N,k}(t^\r-a^\r)^{1-\a-k}V_k(t)+E_N(t),\end{equation}
with
$$\lim_{N\to\infty}E_N(t)=0, \quad \forall t\in[a,b].$$
\end{theorem}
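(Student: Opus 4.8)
The plan is to start from the integral representation in Theorem~\ref{teo:equiv}, namely $\LD x(t) = \frac{\r^\a}{\Gamma(1-\a)}\int_a^t (t^\r-\t^\r)^{-\a}x'(\t)\,d\t$, and to expand the kernel in a power series centred at $\t=a$. Writing $t^\r-\t^\r = (t^\r-a^\r)\left(1-\frac{\t^\r-a^\r}{t^\r-a^\r}\right)$, the natural move is the generalized binomial expansion in $z=(\t^\r-a^\r)/(t^\r-a^\r)\in[0,1]$. However, the coefficients of $(1-z)^{-\a}$ behave like $k^{\a-1}$, which are \emph{not} summable at the endpoint $z=1$ (i.e. $\t=t$), so term-by-term integration cannot be justified directly. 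To cure this I would first integrate by parts: taking the primitive $-\frac{1}{\r(1-\a)}(t^\r-\t^\r)^{1-\a}$ of $\t^{\r-1}(t^\r-\t^\r)^{-\a}$ against $\t^{1-\r}x'(\t)$ (this is where $x\in C^2$ enters, since $\t^{1-\r}x'(\t)$ must be differentiated), I obtain a boundary contribution proportional to $a^{1-\r}x'(a)(t^\r-a^\r)^{1-\a}$ plus $\frac{\r^{\a-1}}{\Gamma(2-\a)}\int_a^t (t^\r-\t^\r)^{1-\a}\frac{d}{d\t}[\t^{1-\r}x'(\t)]\,d\t$. The exponent is now $1-\a>0$, and the corresponding binomial coefficients, which work out to be exactly $\frac{\Gamma(k-1+\a)}{\Gamma(\a-1)k!}$, decay like $k^{\a-2}$.

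Then I would expand $(t^\r-\t^\r)^{1-\a}=(t^\r-a^\r)^{1-\a}\sum_{k=0}^\infty \frac{\Gamma(k-1+\a)}{\Gamma(\a-1)k!}z^k$. Because these coefficients are summable, the series converges uniformly on $[0,1]$ by the Weierstrass test, which legitimizes integrating term by term. Each resulting inner integral $\int_a^t(\t^\r-a^\r)^k\frac{d}{d\t}[\t^{1-\r}x'(\t)]\,d\t$ I would integrate by parts once more; for $k\ge1$ the boundary contribution at $\t=a$ vanishes and one is left with $(t^\r-a^\r)^k t^{1-\r}x'(t)-k\r V_k(t)$, while the $k=0$ term produces $t^{1-\r}x'(t)-a^{1-\r}x'(a)$. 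Collecting the $x'(t)$ contributions gives the coefficient $\sum_{k=0}^\infty \frac{\Gamma(k-1+\a)}{\Gamma(\a-1)k!}$, i.e. the limit of $A_N$; the $V_k$ contributions reassemble into the $B_{N,k}$ terms after simplifying $k\r/k!=\r/(k-1)!$; and the spurious boundary term $a^{1-\r}x'(a)(t^\r-a^\r)^{1-\a}$ cancels exactly against the one produced by the first integration by parts. This yields the exact identity \eqref{eq:mainformula} with the full infinite series, so that $E_N(t)$ is precisely the remainder obtained by truncating both the $A$-sum and the $B$-sum at $N$.

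Finally, to show $\lim_{N\to\infty}E_N(t)=0$ I would treat the two pieces of the tail separately. The first piece is $(A_\infty-A_N)(t^\r-a^\r)^{1-\a}t^{1-\r}x'(t)$, which tends to $0$ because the series defining $A_N$ converges and the remaining factor is bounded for fixed $t$. The delicate piece is $\sum_{k=N+1}^\infty B_{N,k}(t^\r-a^\r)^{1-\a-k}V_k(t)$: here the crude bound $|V_k(t)|\le \|x'\|_C (t-a)(t^\r-a^\r)^{k-1}$ is insufficient, since $|B_{N,k}|=O(k^{\a-1})$ is not summable. The key refinement is to substitute $s=\t^\r-a^\r$ in $V_k$, which gives $|V_k(t)|\le \frac{C}{\r k}\|x'\|_C (t^\r-a^\r)^k$ with $C=\max\{a^{1-\r},t^{1-\r}\}$; the extra factor $1/k$ makes $|B_{N,k}|/k=O(k^{\a-2})$ summable, so the tail is dominated by a convergent-series remainder times $(t^\r-a^\r)^{1-\a}$ and hence tends to $0$ (the point $t=a$ is trivial, since every term vanishes there). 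I expect the main obstacle to be precisely this convergence bookkeeping: realizing that one integration by parts is needed \emph{up front} to turn the exponent $-\a$ into $1-\a$ and so make the binomial coefficients summable, and then extracting the saving factor $1/k$ in the estimate of $V_k$ so that the remainder series converges.
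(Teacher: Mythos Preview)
Your proposal is correct and follows the same skeleton as the paper: an initial integration by parts to convert the exponent $-\a$ into $1-\a$, then the generalized binomial expansion of $(t^\r-\t^\r)^{1-\a}$, then a second integration by parts on each term to produce the $A_N$ and $B_{N,k}$ structure, with the boundary term at $\t=a$ cancelling exactly as you describe.

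The only organizational difference lies in the remainder. The paper truncates the binomial series at $N$ \emph{before} performing the second integration by parts, so that $E_N(t)$ is represented directly as
\[
E_N(t)=\frac{\r^{\a-1}}{\Gamma(2-\a)}(t^\r-a^\r)^{1-\a}\int_a^t\sum_{k=N+1}^\infty \frac{\Gamma(k-1+\a)}{\Gamma(\a-1)k!}\left(\frac{\t^\r-a^\r}{t^\r-a^\r}\right)^k\frac{d}{d\t}\bigl(\t^{1-\r}x'(\t)\bigr)\,d\t,
\]
and is then bounded in one stroke by $M(t)$ times the tail of the absolutely convergent coefficient series (of order $k^{\a-2}$). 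You instead carry the full series through the second integration by parts and split the tail into an $(A_\infty-A_N)$ piece and a $\sum_{k>N}B_{\cdot,k}V_k$ piece, which forces you to recover the missing $1/k$ from the estimate $|V_k(t)|\le \tfrac{C}{\r k}(t^\r-a^\r)^k$. Both routes are valid; the paper's is a little cleaner precisely because it avoids this extra bookkeeping.
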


\begin{proof} Set
$$u'(\t)=(t^\r-\t^\r)^{-\a}\t^{\r-1} \quad \mbox{and} \quad v(\t)=\t^{1-\r}x'(\t).$$
Then, integrating by parts
we obtain
$$\begin{array}{ll}
\DS\LD x(t) &=\DS\frac{\r^\a}{\Gamma(1-\alpha)}\int_a^t (t^\r-\t^\r)^{-\a}x'(\t) d\tau\\
&=\DS\frac{\r^{\a-1}}{\Gamma(2-\alpha)}(t^\r-a^\r)^{1-\a}a^{1-\r}x'(a)+\frac{\r^{\a-1}}{\Gamma(2-\alpha)}\int_a^t (t^\r-\t^\r)^{1-\a}
\frac{d}{d\t}\left(\t^{1-\r}x'(\t)\right) d\tau.\\
\end{array}$$
Using the generalized binomial theorem, we get the decomposition
$$\begin{array}{ll}
(t^\r-\tau^\r)^{1-\alpha}& =\DS (t^\r-a^\r)^{1-\alpha}\left(1-\frac{\t^\r-a^\r}{t^\r-a^\r}\right)^{1-\alpha}\\
& =\DS (t^\r-a^\r)^{1-\alpha}\sum_{k=0}^\infty \frac{\Gamma(k-1+\a)}{\Gamma(\a-1)k!}\left(\frac{\t^\r-a^\r}{t^\r-a^\r}\right)^k.
\end{array}$$
Substituting this series into the expression of the fractional derivative, we obtain
$$\begin{array}{ll}
\DS\LD x(t) &=\DS \frac{\r^{\a-1}}{\Gamma(2-\alpha)}(t^\r-a^\r)^{1-\a}a^{1-\r}x'(a)+\frac{\r^{\a-1}}{\Gamma(2-\alpha)}\int_a^t (t^\r-a^\r)^{1-\a}\\
&\quad \DS\times \sum_{k=0}^N \frac{\Gamma(k-1+\a)}{\Gamma(\a-1)k!}\left(\frac{\t^\r-a^\r}{t^\r-a^\r}\right)^k \frac{d}{d\t}\left(\t^{1-\r}x'(\t)\right) d\tau+E_N(t),\\
\end{array}$$
where
$$E_{N}(t)=\frac{\r^{\a-1}}{\Gamma(2-\alpha)}\int_a^t (t^\r-a^\r)^{1-\a}\sum_{k=N+1}^\infty \frac{\Gamma(k-1+\a)}{\Gamma(\a-1)k!}\left(\frac{\t^\r-a^\r}{t^\r-a^\r}\right)^k \frac{d}{d\t}\left(\t^{1-\r}x'(\t)\right) d\tau.$$
Now, let us split the sum into $k=0$ and $k=1,\ldots, N$, that is,
$$\begin{array}{ll}
\DS\LD x(t) &=\DS \frac{\r^{\a-1}}{\Gamma(2-\alpha)}(t^\r-a^\r)^{1-\a}t^{1-\r}x'(t)+\frac{\r^{\a-1}}{\Gamma(2-\alpha)}(t^\r-a^\r)^{1-\a} \\
&\quad \DS\times \sum_{k=1}^N \frac{\Gamma(k-1+\a)}{\Gamma(\a-1)k!(t^\r-a^\r)^k}\int_a^t(\t^\r-a^\r)^k \frac{d}{d\t}\left(\t^{1-\r}x'(\t)\right) d\tau+E_N(t).\\
\end{array}$$
Setting
$$u(\t)=(\t^\r-a^\r)^k \quad \mbox{and} \quad v'(\t)=\frac{d}{d\t}\left(\t^{1-\r}x'(\t)\right)$$
and then integrating by parts, we obtain
$$\begin{array}{ll}
\DS\LD x(t) &=\DS \frac{\r^{\a-1}}{\Gamma(2-\alpha)}(t^\r-a^\r)^{1-\a}t^{1-\r}x'(t)\left[1+\sum_{k=1}^N\frac{\Gamma(k-1+\a)}{\Gamma(\a-1)k!}\right]\\
&\quad \DS-\frac{\r^{\a}}{\Gamma(2-\alpha)}\sum_{k=1}^N \frac{\Gamma(k-1+\a)}{\Gamma(\a-1)(k-1)!}(t^\r-a^\r)^{1-\a-k}
\int_a^t(\t^\r-a^\r)^{k-1}x'(\t)d\tau+E_N(t).\\
\end{array}$$
It remains to show that
$$\lim_{N\to\infty}E_N(t)=0, \quad \forall t\in[a,b],$$
and to do this we determine an upper bound for the error. Since $\t\in[a,t]$, we have
$$\sum_{k=N+1}^\infty \left|\frac{\Gamma(k-1+\a)}{\Gamma(\a-1)k!}\left(\frac{\t^\r-a^\r}{t^\r-a^\r}\right)^k \right|\leq \sum_{k=N+1}^\infty \frac{\exp((1-\a)^2+1-\a)}{k^{2-\a}}$$
$$\leq\int_N^\infty \frac{\exp((1-\a)^2+1-\a)}{k^{2-\a}}\, dk= \frac{\exp((1-\a)^2+1-\a)}{N^{1-\a}(1-\a)}.$$
Taking
$$M(t):=\max_{\t\in[a,t]}\left|\frac{d}{d\t}\left(\t^{1-\r}x'(\t)\right) \right|,$$
we get the following upper bound:
$$\begin{array}{ll}
\DS\left|E_N(t)\right|& \DS \leq M(t)\frac{\exp((1-\a)^2+1-\a)}{N^{1-\a}(1-\a)}\frac{\r^{\a-1}}{\Gamma(2-\alpha)}\int_a^t (t^\r-a^\r)^{1-\a}d\t\\
&\DS= M(t)\frac{\exp((1-\a)^2+1-\a)\r^{\a-1}}{N^{1-\a}(1-\a)\Gamma(2-\alpha)}(t^\r-a^\r)^{1-\a}(t-a),\\
\end{array}$$
which converges to zero as $N$ goes to $\infty$, for al $t\in[a,b]$.
\end{proof}

\begin{remark} We stress that, when $\r=1$, Theorem~\ref{teo:approx} reduces to the one proven in \cite{Atan1,Pooseh1}, and as $\r\to0^+$, we obtain the main result of \cite{Pooseh0}.
\end{remark}

In an analogous way we obtain a decomposition formula for the right Caputo--Katugampola fractional derivative. The proof is omitted, since it is similar to the proof of Theorem~\ref{teo:approx}.

\begin{theorem} Let $x:[a,b]\rightarrow\mathbb{R}$ be a function of class $C^2$, and $N \geq 1$ an integer. Define the quantities
$$\begin{array}{ll}
A_N&=\DS\frac{\r^{\a-1}}{\Gamma(2-\a)}\sum_{k=0}^N\frac{\Gamma(k-1+\a)}{\Gamma(\a-1)k!},\\
B_{N,k}&=\DS\frac{\r^\a\Gamma(k-1+\a)}{\Gamma(2-\a)\Gamma(\a-1)(k-1)!},\quad k=1,\ldots,N,\\
\end{array}$$
and functions $W_k:[a,b]\rightarrow\mathbb{R}$ by
$$W_k(t)=\int_t^b (b^\r-\t^\r)^{k-1}x'(\t)d\t,\quad  k=1,\ldots,N.$$
Then,
$$\RD x(t)=-A_N(b^\r-t^\r)^{1-\a} t^{1-\r}x'(t)+\sum_{k=1}^N B_{N,k}(b^\r-t^\r)^{1-\a-k}W_k(t)+E_N(t),$$
such that
$$\lim_{N\to\infty}E_N(t)=0, \quad \forall t\in[a,b].$$
\end{theorem}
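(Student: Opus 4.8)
The plan is to reproduce, \emph{mutatis mutandis}, the argument of Theorem~\ref{teo:approx}, paying attention to the orientation of the interval and to the sign changes that the right derivative introduces. I would begin from the representation furnished by Theorem~\ref{teo:equiv},
$$\RD x(t)=\frac{-\r^\a}{\Gamma(1-\a)}\int_t^b (\t^\r-t^\r)^{-\a}x'(\t)\, d\t,$$
and write $x'(\t)=\t^{\r-1}\cdot \t^{1-\r}x'(\t)$ so that the factor $(\t^\r-t^\r)^{-\a}\t^{\r-1}$ becomes an exact derivative. Integrating by parts with $u'(\t)=(\t^\r-t^\r)^{-\a}\t^{\r-1}$ and $v(\t)=\t^{1-\r}x'(\t)$, the antiderivative $u(\t)=(\t^\r-t^\r)^{1-\a}/[\r(1-\a)]$ vanishes at the lower limit $\t=t$, so a boundary contribution survives only at $\t=b$. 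Using $\Gamma(2-\a)=(1-\a)\Gamma(1-\a)$, this yields
$$\RD x(t)=\frac{-\r^{\a-1}}{\Gamma(2-\a)}(b^\r-t^\r)^{1-\a}b^{1-\r}x'(b)+\frac{\r^{\a-1}}{\Gamma(2-\a)}\int_t^b (\t^\r-t^\r)^{1-\a}\frac{d}{d\t}\left(\t^{1-\r}x'(\t)\right)d\t.$$

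Next I would expand the kernel by the generalized binomial theorem, but now the correct variable is $z=(b^\r-\t^\r)/(b^\r-t^\r)$, since $\t^\r-t^\r=(b^\r-t^\r)(1-z)$; this gives
$$(\t^\r-t^\r)^{1-\a}=(b^\r-t^\r)^{1-\a}\sum_{k=0}^\infty \frac{\Gamma(k-1+\a)}{\Gamma(\a-1)k!}\left(\frac{b^\r-\t^\r}{b^\r-t^\r}\right)^k.$$
Substituting, truncating at $k=N$, and collecting the tail into $E_N(t)$, I would isolate the $k=0$ term. Since $\int_t^b \frac{d}{d\t}(\t^{1-\r}x'(\t))\,d\t=b^{1-\r}x'(b)-t^{1-\r}x'(t)$, the $k=0$ contribution produces a $b^{1-\r}x'(b)$ piece that exactly cancels the boundary term from the first integration by parts, leaving the leading term $-\,\r^{\a-1}\Gamma(2-\a)^{-1}(b^\r-t^\r)^{1-\a}t^{1-\r}x'(t)$. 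Verifying this cancellation is the one point that demands care rather than routine transcription.

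For $k=1,\dots,N$ I would integrate by parts a second time, now with $u(\t)=(b^\r-\t^\r)^k$ and $v'(\t)=\frac{d}{d\t}(\t^{1-\r}x'(\t))$. Here $u$ vanishes at $\t=b$, while $u'(\t)=-k\r\,\t^{\r-1}(b^\r-\t^\r)^{k-1}$, so the surviving boundary term sits at $\t=t$ and the residual integral collapses to $k\r\,W_k(t)$ with $W_k$ as in the statement. The $t$-boundary contributions, together with the $k=0$ term, assemble into $-A_N(b^\r-t^\r)^{1-\a}t^{1-\r}x'(t)$, whereas the factor $\r^{\a-1}\cdot k\r/k!=\r^{\a}/(k-1)!$ turns the $W_k$ pieces into $\sum_{k=1}^N B_{N,k}(b^\r-t^\r)^{1-\a-k}W_k(t)$, matching the claimed coefficients $A_N$ and $B_{N,k}$ exactly.

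Finally, $\lim_{N\to\infty}E_N(t)=0$ follows verbatim from the estimate in Theorem~\ref{teo:approx}: for $\t\in[t,b]$ the ratio $(b^\r-\t^\r)/(b^\r-t^\r)$ lies in $[0,1]$, so the tail-coefficient bound $\Gamma(k-1+\a)/[\Gamma(\a-1)k!]\leq \exp((1-\a)^2+1-\a)/k^{2-\a}$ still applies, and estimating $\frac{d}{d\t}(\t^{1-\r}x'(\t))$ by its maximum over $[t,b]$ produces an $O(N^{-(1-\a)})$ bound that vanishes as $N\to\infty$. The main obstacle throughout is purely organizational: keeping track of the signs and of which endpoint each integration by parts loads the boundary term onto; once the $b^{1-\r}x'(b)$ cancellation is confirmed, the remainder is symmetric to the left-hand case.
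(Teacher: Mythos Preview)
Your proposal is correct and follows exactly the approach the paper intends: the paper omits the proof entirely, stating only that it is ``similar to the proof of Theorem~\ref{teo:approx},'' and your outline faithfully carries out that symmetric argument with the appropriate sign and endpoint adjustments. The cancellation of the $b^{1-\r}x'(b)$ boundary term against the $k=0$ contribution and the second integration by parts producing $k\r\,W_k(t)$ are handled correctly, and the error bound transfers verbatim since $(b^\r-\t^\r)/(b^\r-t^\r)\in[0,1]$ for $\t\in[t,b]$.
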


{ Due to } the complexity of solving analytically fractional differential equations, numerical tools are often applied to solve the given problems (see e.g. \cite{ford1,ford2,Gracia,Sousa,Yan,Yang}).
We now present a simple but efficient method to solve a fractional differential equation as presented in Section~\ref{sec:FDE}. The idea is to replace the fractional derivative $\LD x$ in the differential equation by an approximation given in Theorem~\ref{teo:approx} up to order $N\in\mathbb N$. In this way we obtain a system of $N+1$ ODE's, with $N+1$ initial conditions.
{ The advantage of this procedure is that after replacing the fractional derivative by the expansion which involves ordinary derivatives only, we no longer deal with a fractional differential equation but with a system of ordinary differential equations. With this, we can apply any known technique from the literature (analytical or numerical) to solve the problem.}
The solution to this system $x_N$ obviously depends on $N$. However, the sequence $(x_N)$ converges to the solution of the fractional differential equation in a neighborhood of the initial point. We refer to \cite{Atan2}, where similar problems, with dependence of Caputo and Riemann-Liouville fractional derivatives, are addressed.

Recall that the Cauchy type problem \eqref{eq:CauchyEq}--\eqref{eq:BoundCond} is equivalent to the problem of finding solutions to the Volterra integral equation \eqref{eq:Volterra}. This last equation, using \eqref{Derivative_Integral}, can be rewritten in the following way:
$$x(t)=x(a)+{^CD_{a+}^{1-\a,\r}}\left[\int_a^t f(\t,x(\t)) \t^{\r-1}\,d\t\right].$$
 Applying the decomposition formula given in Theorem~\ref{teo:approx}, we obtain
\begin{equation}\label{solExact}
x(t)=x(a)+A_N(t^\r-a^\r)^{\a}f(t,x(t))-\sum_{k=1}^N B_{N,k}(t^\r-a^\r)^{\a-k}V_k(t)+E_N(t),
\end{equation}
where
$$\begin{array}{ll}
A_N&=\DS\frac{\r^{-\a}}{\Gamma(1+\a)}\sum_{k=0}^N\frac{\Gamma(k-\a)}{\Gamma(-\a)k!},\\
B_{N,k}&=\DS\frac{\r^{1-\a}\Gamma(k-\a)}{\Gamma(1+\a)\Gamma(-\a)(k-1)!},\quad \mbox{for} \, k \in \{1,\ldots,N\},\\
V_k(t)&=\displaystyle\int_a^t (\t^\r-a^\r)^{k-1}f(\t,x(\t))\t^{\r-1}d\t,\quad \mbox{for} \, k \in \{1,\ldots,N\},
\end{array}$$
and the error is bounded by the formula
$$\left|E_N(t)\right|\DS \leq M(t)\frac{\exp(\a^2+\a)\r^{-\a}}{\a N^{\a}\Gamma(1+\alpha)}(t^\r-a^\r)^{\a}(t-a),$$
with
$$M(t):=\max_{\t\in[a,t]}\left|\frac{d}{d\t}f(\t,x(\t))\right|.$$
In order to get an approximated solution, $x_N$, we truncate the formula up to order $N$, receiving
\begin{equation}\label{solApprox}
x_N(t)=x(a)+A_N(t^\r-a^\r)^{\a}f(t,x_N(t))-\sum_{k=1}^N B_{N,k}(t^\r-a^\r)^{\a-k}V_{k,N}(t)
\end{equation}
where
$$V_{k,N}(t)=\int_a^t (\t^\r-a^\r)^{k-1}f(\t,x_N(\t))\t^{\r-1}d\t,\quad \mbox{for} \, k \in \{1,\ldots,N\},$$
(we remark that $x_N(a)=x(a))$.

\begin{theorem} Let $f:[a,b]\times \mathbb R \to \mathbb R$ be a continuous function and Lipschitz with respect to the second variable (see \eqref{eq:LipType}).
For $N\in\mathbb N$, let $x$ and $x_N$ be given by conditions \eqref{solExact} and \eqref{solApprox}, respectively. Also, let $T\in\mathbb R$ be a real in the open interval
$$a<T<\left(a^\r+\r\left(\frac{\Gamma(1+\a)}{L}\right)^{\frac1\a}\right)^{\frac1\r}.$$
Then, for all $t\in[a,T]$, $x_N(t)\to x(t)$ as $N\to\infty$.
\end{theorem}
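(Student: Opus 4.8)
The plan is to control the sup-norm $\|x-x_N\|_{C([a,T])}:=\max_{t\in[a,T]}|x(t)-x_N(t)|$ by subtracting the two representations \eqref{solExact} and \eqref{solApprox} directly. Both $x$ and $x_N$ satisfy the \emph{same} algebraic expression; the only differences are that the remainder $E_N$ appears in \eqref{solExact} but not in \eqref{solApprox}, and that $x$ feeds the nonlinearity in \eqref{solExact} while $x_N$ feeds it in \eqref{solApprox}. Consequently $x-x_N$ is a linear combination of the Lipschitz defects $f(\cdot,x)-f(\cdot,x_N)$ plus $E_N$, and I would set up a self-referential inequality of the form $\|x-x_N\|_{C([a,T])}\le q\,\|x-x_N\|_{C([a,T])}+\max_{t\in[a,T]}|E_N(t)|$, where $q$ turns out to be exactly $L\r^{-\a}(T^\r-a^\r)^\a/\Gamma(1+\a)$. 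The hypothesis on $T$ is precisely the assertion $q<1$: rearranging $T^\r-a^\r<\r(\Gamma(1+\a)/L)^{1/\a}$ and raising to the power $\a$ gives $L\r^{-\a}(T^\r-a^\r)^\a/\Gamma(1+\a)<1$.

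First I would subtract, obtaining
$$x(t)-x_N(t)=A_N(t^\r-a^\r)^\a[f(t,x(t))-f(t,x_N(t))]-\sum_{k=1}^N B_{N,k}(t^\r-a^\r)^{\a-k}[V_k(t)-V_{k,N}(t)]+E_N(t).$$
The Lipschitz condition \eqref{eq:LipType} bounds the first bracket, giving at most $L\,|A_N|\,(T^\r-a^\r)^\a\,\|x-x_N\|_{C([a,T])}$ after estimating $(t^\r-a^\r)^\a\le(T^\r-a^\r)^\a$. For the $V_k$ differences I would use $|V_k(t)-V_{k,N}(t)|\le L\int_a^t(\t^\r-a^\r)^{k-1}|x(\t)-x_N(\t)|\,\t^{\r-1}\,d\t$ together with the elementary identity $\int_a^t(\t^\r-a^\r)^{k-1}\t^{\r-1}\,d\t=(t^\r-a^\r)^k/(\r k)$ (substitution $u=\t^\r-a^\r$), so that the $k$-th term of the sum contributes at most $|B_{N,k}|(t^\r-a^\r)^{\a}L\,\|x-x_N\|_{C([a,T])}/(\r k)$.

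The crux is the bookkeeping of the coefficients. Writing $B_{N,k}/(\r k)=\r^{-\a}\Gamma(k-\a)/[\Gamma(1+\a)\Gamma(-\a)k!]$ (using $k\,(k-1)!=k!$), one recognizes this as exactly the $k$-th summand in $A_N$. Since $\Gamma(-\a)<0$ while $\Gamma(k-\a)>0$ for $k\ge1$, every $B_{N,k}$ is negative, so after peeling off the $k=0$ term (which equals $1$) we get $\sum_{k=1}^N|B_{N,k}|/(\r k)=\r^{-\a}/\Gamma(1+\a)-A_N$. Adding the first-term coefficient $A_N$ (positive, hence equal to $|A_N|$) produces the clean cancellation $A_N+(\r^{-\a}/\Gamma(1+\a)-A_N)=\r^{-\a}/\Gamma(1+\a)$, so the total factor multiplying $\|x-x_N\|_{C([a,T])}$ is precisely $q=L\r^{-\a}(T^\r-a^\r)^\a/\Gamma(1+\a)$.

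Finally, taking the maximum over $t\in[a,T]$ yields $(1-q)\|x-x_N\|_{C([a,T])}\le\max_{t\in[a,T]}|E_N(t)|$, and the stated error estimate, whose dominant factor is $N^{-\a}$, forces $\max_{t\in[a,T]}|E_N(t)|\to0$ as $N\to\infty$ (the bound on $M(t)$ being uniform on the compact $[a,T]$). Since $q<1$ by the choice of $T$, I conclude $\|x-x_N\|_{C([a,T])}\to0$, which in particular gives $x_N(t)\to x(t)$ for every $t\in[a,T]$. I expect the main obstacle to be the coefficient identity in the third paragraph: one must correctly identify $B_{N,k}/(\r k)$ with the general term of $A_N$, track the sign of $\Gamma(-\a)$ so that absolute values are handled properly, and verify that the $A_N$ contributions cancel exactly, leaving the scale-invariant constant $q$ that matches the threshold defining the admissible window for $T$.
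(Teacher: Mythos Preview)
Your argument is correct and in fact cleaner than the paper's. Both proofs start identically: subtract \eqref{solExact} and \eqref{solApprox}, apply the Lipschitz bound, and compute $\int_a^t(\t^\r-a^\r)^{k-1}\t^{\r-1}\,d\t=(t^\r-a^\r)^k/(\r k)$. The divergence is in how the coefficients are handled. The paper estimates $|A_N|$ and $\sum_{k=1}^N|B_{N,k}|/(\r k)$ \emph{separately}, invoking Euler's reflection formula and a closed form for partial sums of $\Gamma(k-\a)/k!$ to obtain an $N$-dependent coefficient
\[
c_N=\frac{L}{\r^\a}(T^\r-a^\r)^\a\Bigl[\frac{2}{\a\pi}\cdot\frac{\Gamma(N+1-\a)}{\Gamma(N+1)}+\frac{1}{\Gamma(1+\a)}\Bigr],
\]
and then uses Stirling's formula to show $c_N\to q$ before passing to the limit in the self-referential inequality. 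You instead observe that $B_{N,k}/(\r k)$ is literally the $k$th summand of $A_N$, so the triangle-inequality bound $|A_N|+\sum_{k=1}^N|B_{N,k}|/(\r k)$ telescopes exactly to $\r^{-\a}/\Gamma(1+\a)$, yielding the sharp constant $q$ for every $N$. This avoids the Gamma-ratio asymptotics entirely and gives the explicit non-asymptotic bound $\|x-x_N\|_{C([a,T])}\le(1-q)^{-1}\max_{[a,T]}|E_N|$.

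One point you should make explicit: the cancellation relies on $A_N>0$, which you assert parenthetically. This follows because the generating identity $\sum_{k=0}^\infty\Gamma(k-\a)/[\Gamma(-\a)k!]\,z^k=(1-z)^\a$ gives $0$ at $z=1$, the $k=0$ term equals $1$, and all $k\ge1$ terms are negative (as you note, $\Gamma(-\a)<0$ while $\Gamma(k-\a)>0$); hence the partial sums decrease monotonically to $0$ and stay positive.
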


\begin{proof} Starting with relations \eqref{solExact} and \eqref{solApprox} we have
$$|x_N(t)-x(t)|\leq |A_N|(t^\r-a^\r)^{\a}|f(t,x_N(t))-f(t,x(t))|+\sum_{k=1}^N |B_{N,k}|(t^\r-a^\r)^{\a-k}|V_{k,N}(t)-V_k(t)|+|E_N(t)|$$
for all $t\in[a,T]$.
Define
$$\delta x_N:=\max_{t\in[a,T]}|x_N(t)-x(t)|.$$
As a consequence of the following relations:
$$|f(t,x_N(t))-f(t,x(t))|\leq L|x_N(t)-x(t)|\leq L \delta x_N;$$
$$\begin{array}{ll}
|V_{k,N}(t)-V_k(t)|&=\left|\displaystyle \int_a^t (\t^\r-a^\r)^{k-1}|f(\t,x_N(\t))-f(\t,x(\t))|\t^{\r-1}d\t\right|\\
&\displaystyle\leq L \delta x_N\int_a^t (\t^\r-a^\r)^{k-1}\t^{\r-1}d\t=\frac{L \delta x_N}{k\r}(t^\r-a^\r)^k;
\end{array}$$
$$\begin{array}{ll}
    |A_N|&=\displaystyle\frac{\r^{-\a}}{\Gamma(1+\a)}\left|\sum_{k=0}^N\frac{\Gamma(k-\a)}{\Gamma(-\a)k!}\right|=\frac{\r^{-\a}}{\Gamma(1+\a)|\Gamma(-\a)|}\cdot \frac{\Gamma(N+1-\a)}{\a \Gamma(N+1)}\\
&    \displaystyle =\frac{\r^{-\a}}{\Gamma(1+\a)\Gamma(1-\a)}\cdot \frac{\Gamma(N+1-\a)}{\Gamma(N+1)}
  \leq\frac{1}{\r^\a\a\pi}\cdot \frac{\Gamma(N+1-\a)}{\Gamma(N+1)}\end{array},$$
by the  Euler's reflection formula and equation (3) in \cite{Garrappa},
 $$\begin{array}{ll}
 \displaystyle\sum_{k=1}^N |B_{N,k}|(t^\r-a^\r)^{\a-k}|V_{k,N}(t)-V_k(t)|&\displaystyle\leq \frac{L \delta x_N(t^\r-a^\r)^\a }{\r^\a\Gamma(1+\a)|\Gamma(-\a)|} \sum_{k=1}^N\frac{\Gamma(k-\a)}{k!}\\
&\displaystyle\leq  \frac{L \delta x_N(t^\r-a^\r)^\a}{\r^\a\Gamma(1+\a)|\Gamma(-\a)|} \left[\frac{\Gamma(N+1-\a)}{\a\Gamma(N+1)}+|\Gamma(-\a)|\right]\\
&\displaystyle \leq  \frac{L \delta x_N}{\r^\a}(t^\r-a^\r)^\a \left[\frac{1}{\a\pi}\cdot \frac{\Gamma(N+1-\a)}{\Gamma(N+1)}+\frac{1}{\Gamma(1+\a)}\right],
\end{array}$$
we conclude that
$$|x_N(t)-x(t)|\leq \frac{L \delta x_N}{\r^\a}(t^\r-a^\r)^\a \left[\frac{2}{\a\pi}\cdot \frac{\Gamma(N+1-\a)}{\Gamma(N+1)}+\frac{1}{\Gamma(1+\a)}\right]+|E_N(t)|$$
for all $t\in[a,T]$.
Taking the maximum, over $t\in[a,T]$, on both sides of the inequality, we get
\begin{equation}\label{relation1}\delta x_N\leq \frac{L \delta x_N}{\r^\a}(T^\r-a^\r)^\a \left[\frac{2}{\a\pi}\cdot \frac{\Gamma(N+1-\a)}{\Gamma(N+1)}+\frac{1}{\Gamma(1+\a)}\right]+\max_{t\in[a,T]}|E_N(t)|.\end{equation}
It is obvious that
$$\lim_{N\to\infty}|E_N(t)|=0.$$
Moreover, as a consequence of the Stirling's formula (see e.g. \cite{Tricomi}), we have
$$\lim_{N\to\infty}\frac{\Gamma(N+1-\a)}{\Gamma(N+1)}=0.$$
Therefore, setting  $N\to\infty$ in \eqref{relation1} we obtain
$$\lim_{N\to\infty}\delta x_N\left[1-\frac{L}{\r^\a\Gamma(1+\a)}(T^\r-a^\r)^\a\right]\leq0,$$
and by the definition of $T$, we must have $\delta x_N\to0$.
\end{proof}

{ From Eq. \eqref{relation1} we obtain that
$$\delta x_N\left[1-\frac{L}{\r^\a}(T^\r-a^\r)^\a \left[\frac{2}{\a\pi}\cdot \frac{\Gamma(N+1-\a)}{\Gamma(N+1)}+\frac{1}{\Gamma(1+\a)}\right]\right]\leq\max_{t\in[a,T]}|E_N(t)|.$$
Since
$$0<\frac{L}{\r^\a}(T^\r-a^\r)^\a<\Gamma(1+\a),$$
we have
$$-\frac{2\Gamma(1+\a)\Gamma(N+1-\a)}{\a\pi\Gamma(N+1)}\leq1-\frac{L}{\r^\a}(T^\r-a^\r)^\a \left[\frac{2}{\a\pi}\cdot \frac{\Gamma(N+1-\a)}{\Gamma(N+1)}+\frac{1}{\Gamma(1+\a)}\right]<1.$$
Therefore, for $N$ sufficiently large, by
$$\lim_{N\to\infty}\frac{\Gamma(N+1-\a)}{\Gamma(N+1)}=0,$$
we get that there exists a function $C$ (depending on $\a,\r,t$, but independent of $N$) such that
$$\delta x_N \leq C N^{\a-1}.$$}

\section{Examples}\label{Sec:examples}

In this section we provide two examples of applications of the results presented in Section~\ref{sec:approx}.

\begin{example}

Let $x(t)=(t^\r-1)^2$, with $t\in[1,2]$, be a test function. By Lemma~\ref{lemma:power}, we have that
$${^CD_{1+}^{\a,\r}} x(t)=\frac{2\r^\a}{\Gamma(3-\a)}(t^\r-1)^{2-\a}.$$
In Figures \ref{fig:1}, \ref{fig:2} and \ref{fig:3} (on the left side) the exact fractional derivative of the function (continuous line) with different numerical approximations of the fractional derivative (dot lines), for different values of $\a\in(0,1)$, $\r>0$ and $N\in\mathbb N$, are compared. The error between the exact and the numerical approximations is measured by the absolute value of the difference between these two expressions, and it is shown on the right side of figures.

\begin{figure}[h!]
\begin{center}
\subfigure{\includegraphics[scale=0.3]{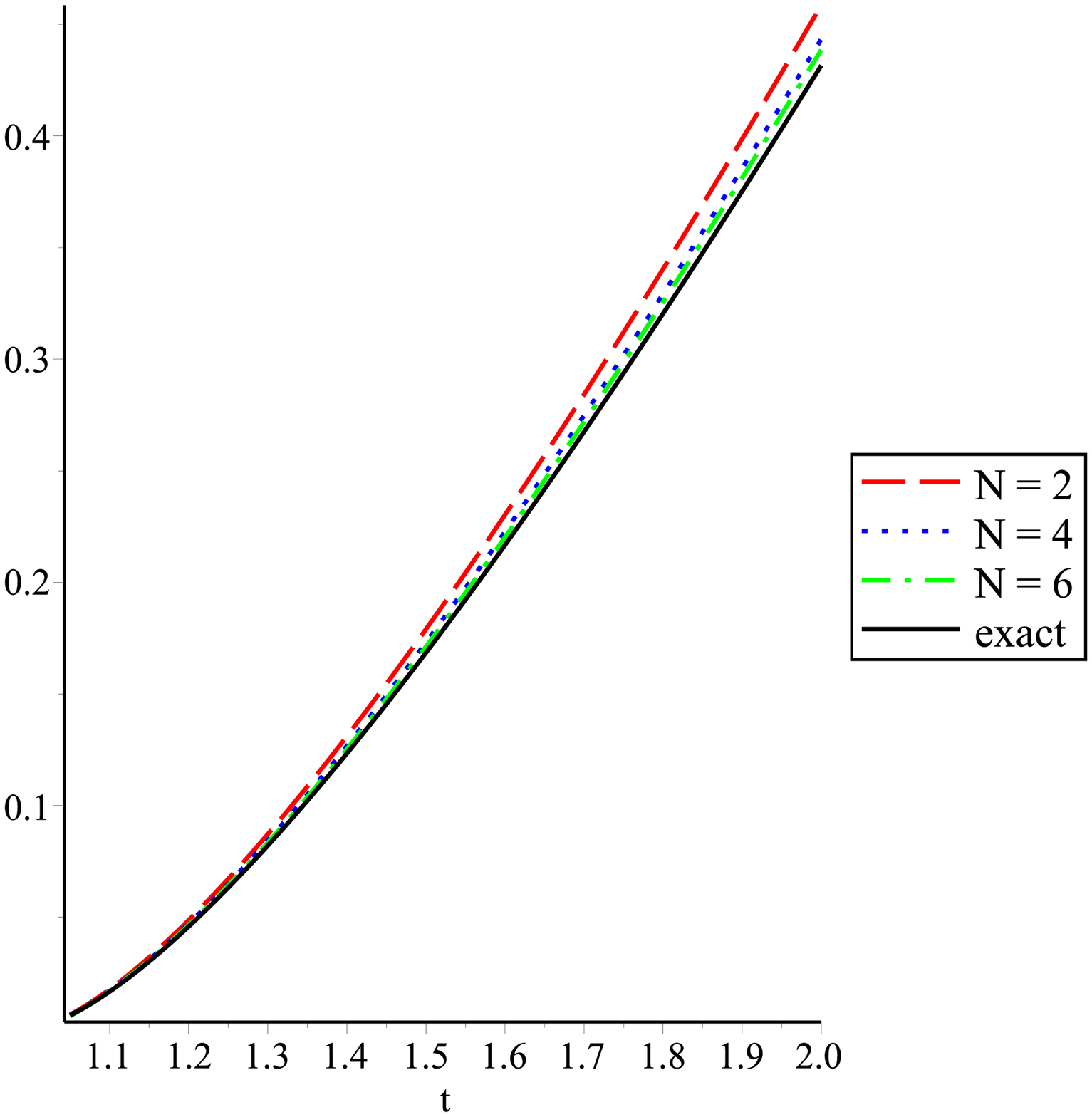}}
\subfigure{\includegraphics[scale=0.3]{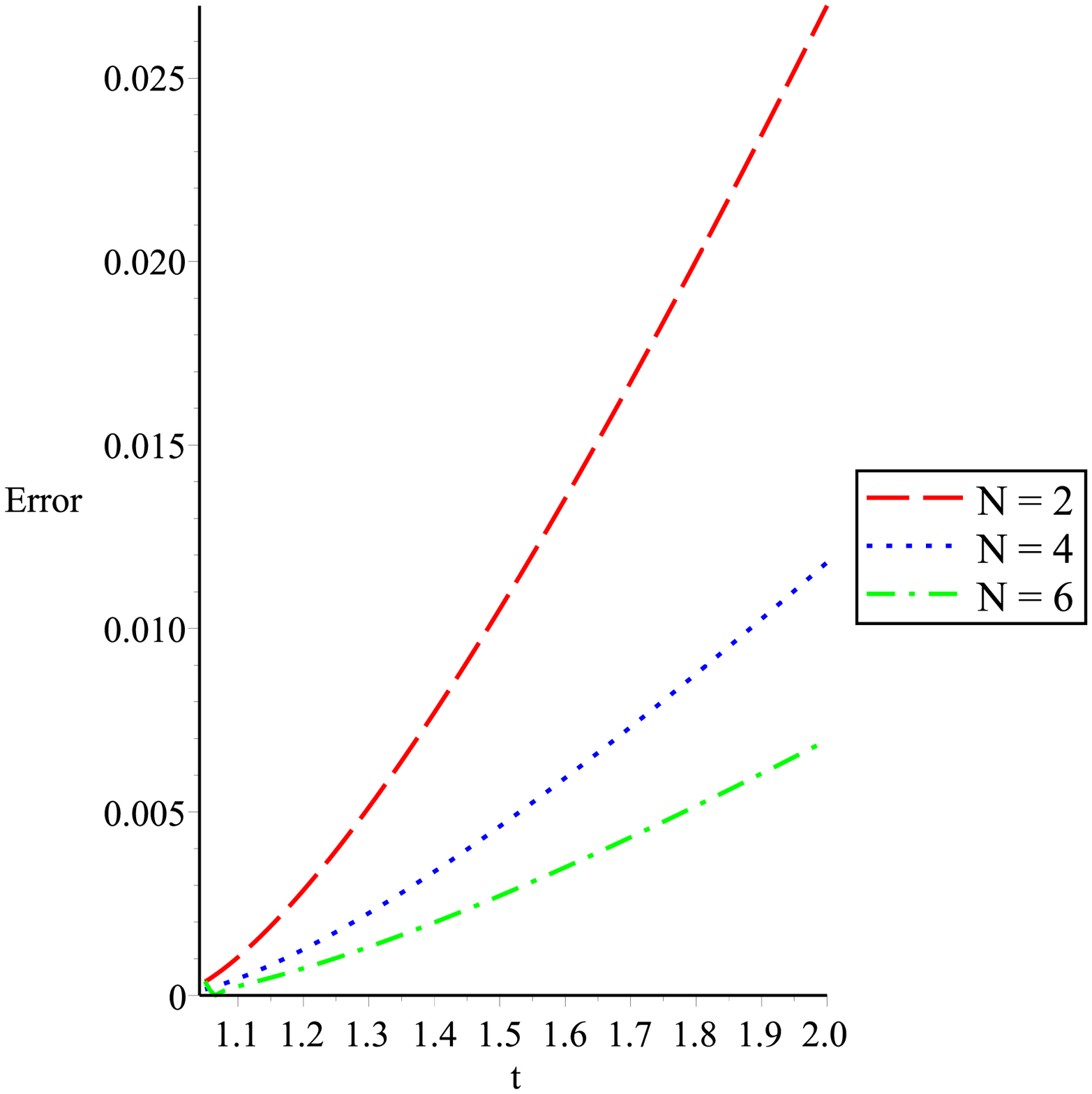}}
\caption{For $\a=0.5$ and $\r=0.6$.}\label{fig:1}
\end{center}
\end{figure}

\begin{figure}[h!]
\begin{center}
\subfigure{\includegraphics[scale=0.3]{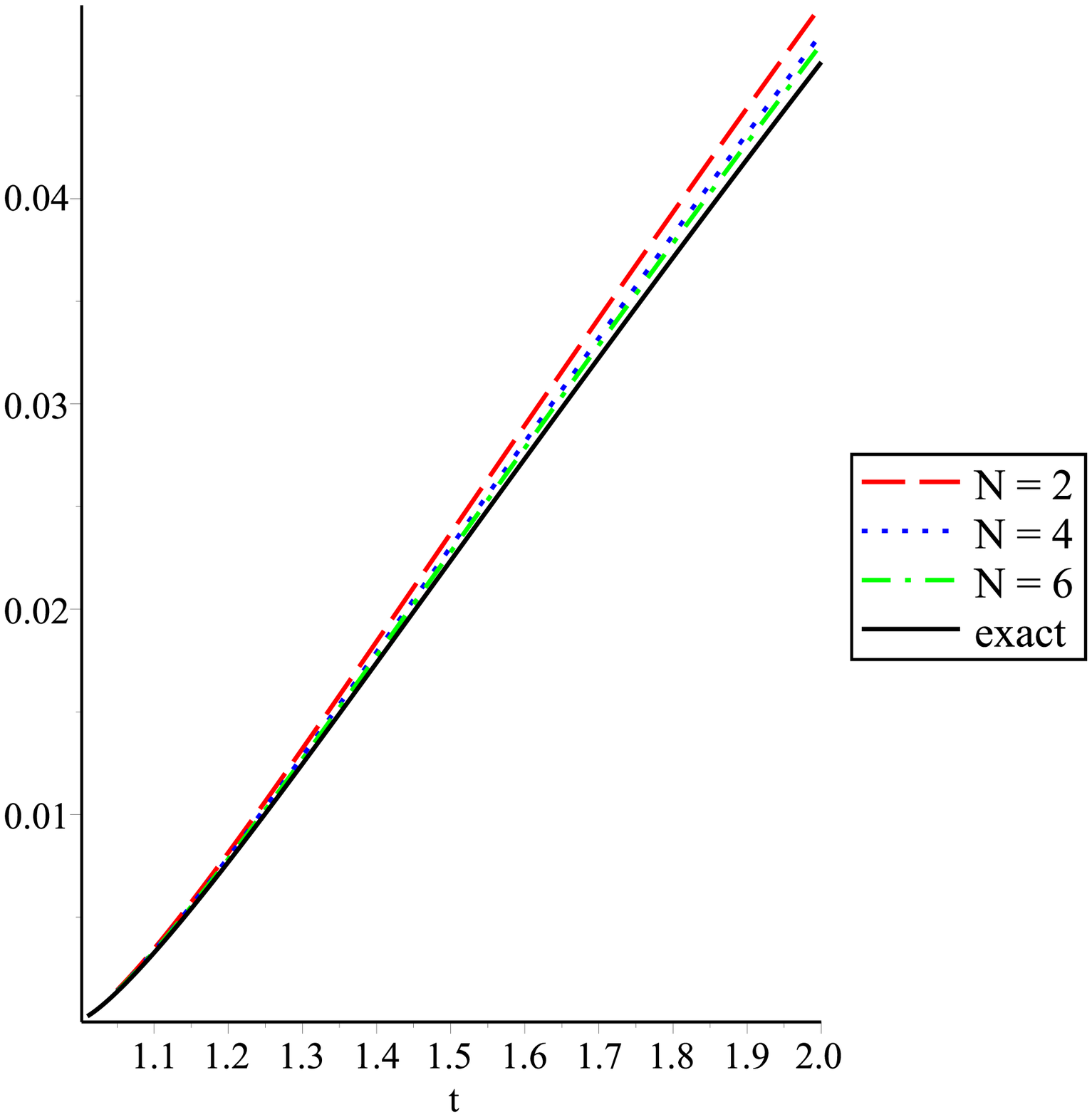}}
\subfigure{\includegraphics[scale=0.3]{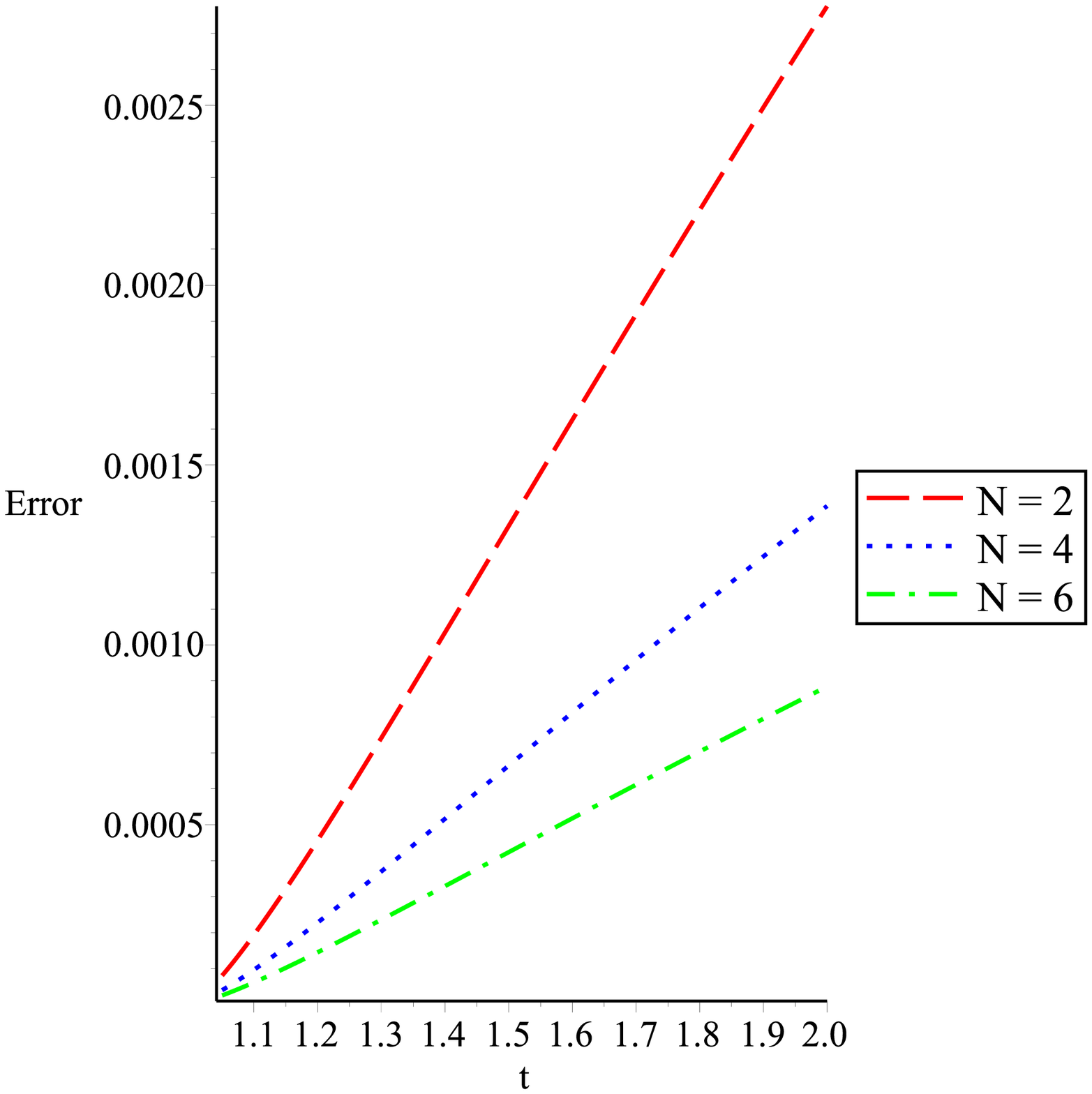}}
\caption{For $\a=0.7$ and $\r=0.2$.}\label{fig:2}
\end{center}
\end{figure}

\begin{figure}[h!]
\begin{center}
\subfigure{\includegraphics[scale=0.3]{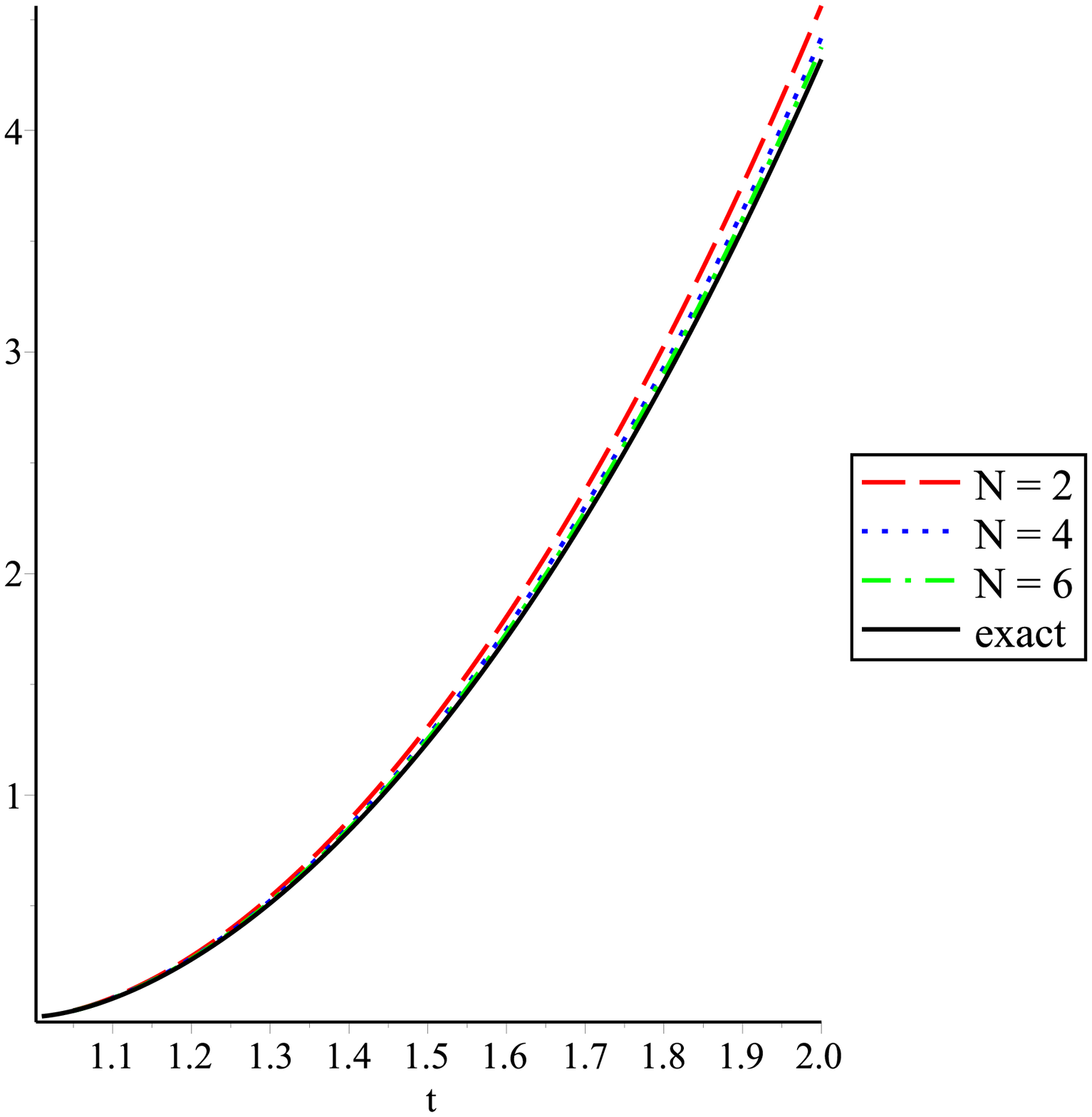}}
\subfigure{\includegraphics[scale=0.3]{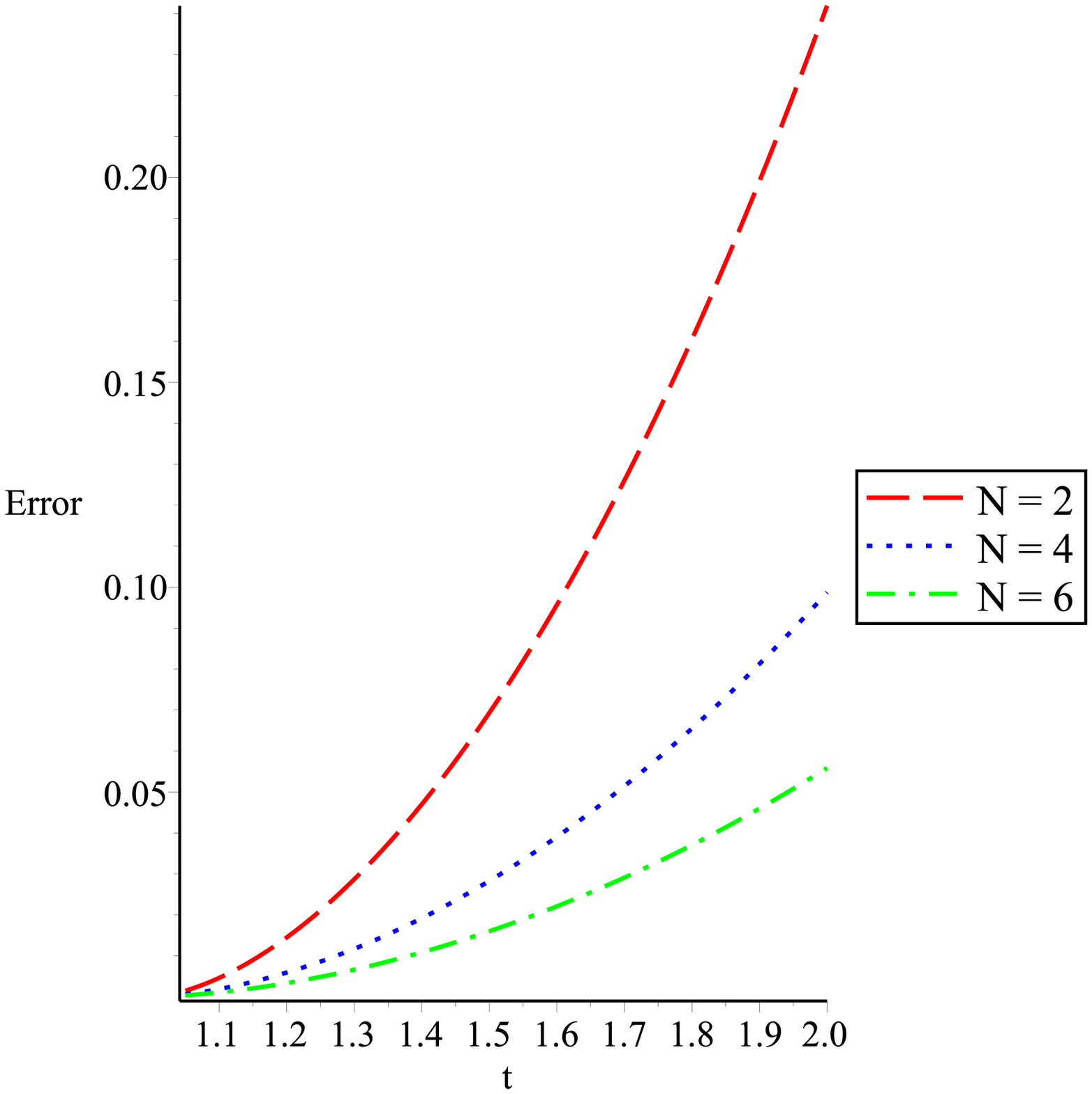}}
\caption{For $\a=0.4$ and $\r=1.5$.}\label{fig:3}
\end{center}
\end{figure}
\end{example}

\begin{example} Consider the following Cauchy type problem
$$\left\{
\begin{array}{l}
\DS {^CD_{1+}^{\a,\r}} x(t)=x(t)+\frac{2\r^\a}{\Gamma(3-\a)}(t^\r-1)^{2-\a}-(t^\r-1)^2, \quad t\in[1,2],\\
x(1)=0,\\
\end{array} \right.$$
It is easy to check that the solution to this system is $x(t)=(t^\r-1)^2$. We shall compare this exact solution with an approximation solutions that can be obtained using the results presented in Section~\ref{sec:approx}. Under the assumption that the set of admissible functions is $C^2([1,2])$ we can replace the fractional operator by the formula given in Theorem~\ref{teo:approx}:
$${^CD_{1+}^{\a,\r}} x(t)\approx A_N(t^\r-1)^{1-\a} t^{1-\r}x'(t)-\sum_{k=1}^N B_{N,k}(t^\r-1)^{1-\a-k}V_k(t),$$
where $A_N$ and $B_{N,k}$ are given as in Theorem~\ref{teo:approx}, and $V_k$ are defined by the solution to the system
$$\left\{
\begin{array}{l}
\DS V'_k(t)=(t^\r-1)^{k-1}x'(t), \quad t\in[1,2],\\
V_k(1)=0,\\
\end{array} \right.$$
for $k=1,\ldots,N$. Thus, we obtain a system of $N+1$ first-order differential equations with $N+1$ initial conditions:
$$\left\{
\begin{array}{l}
\DS  A_N(t^\r-1)^{1-\a} t^{1-\r}x'(t)-x(t)-\sum_{k=1}^N B_{N,k}(t^\r-1)^{1-\a-k}V_k(t) =\frac{2\r^\a}{\Gamma(3-\a)}(t^\r-1)^{2-\a}-(t^\r-1)^2,\\
\DS V'_k(t)=(t^\r-1)^{k-1}x'(t), \quad k=1,\ldots,N,\\
x(1)=0,\\
V_k(1)=0, \quad  k=1,\ldots,N.\\
\end{array} \right.$$
Solutions, for different values of $\a,\r$ and $N$, are presented in Figures \ref{fig:4}, \ref{fig:5} and \ref{fig:6} (on the left side). The error between the exact and the numerical approximations is shown on the right side of figures.

\begin{figure}[h!]
\begin{center}
\subfigure{\includegraphics[scale=0.3]{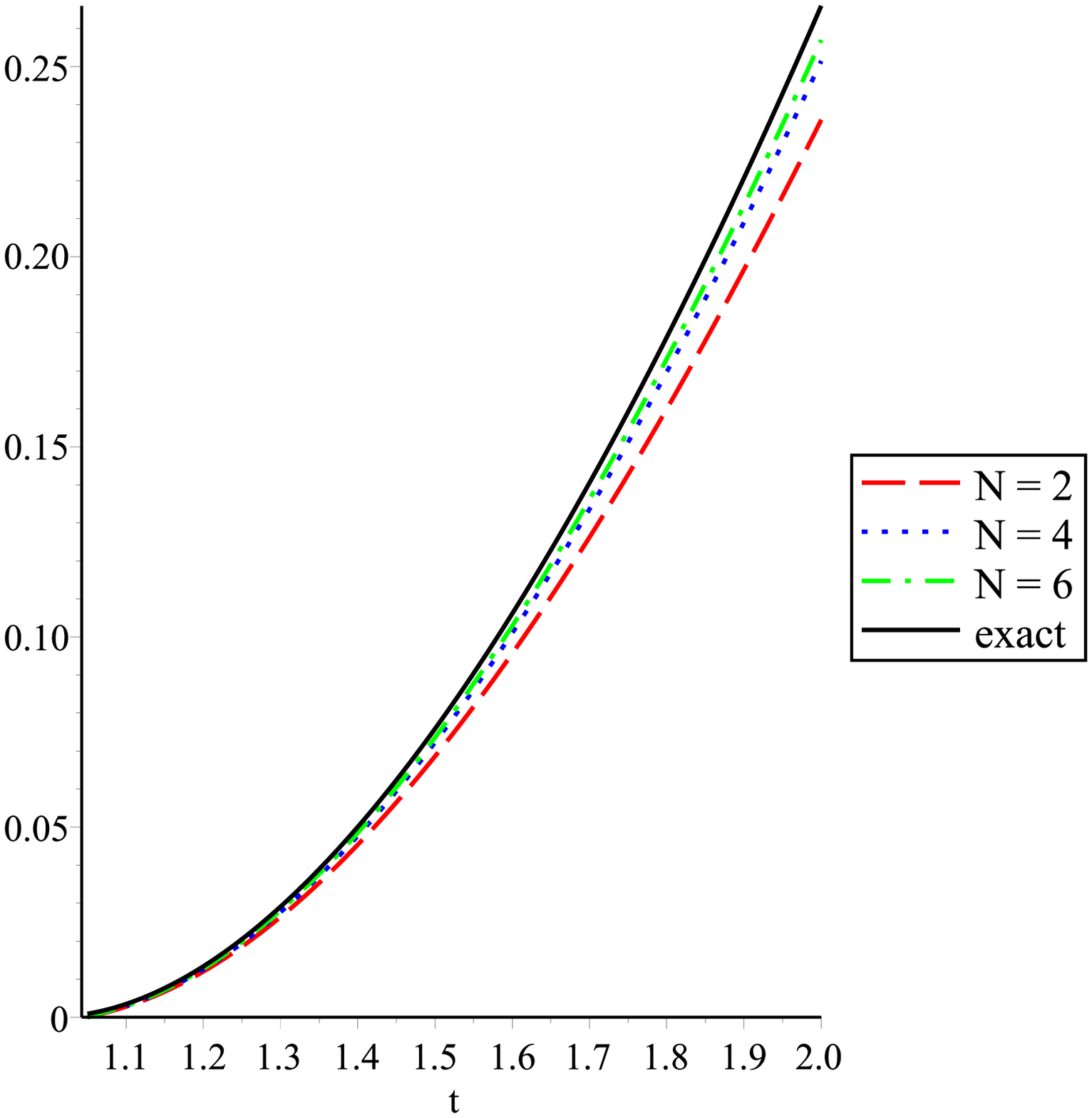}}
\subfigure{\includegraphics[scale=0.3]{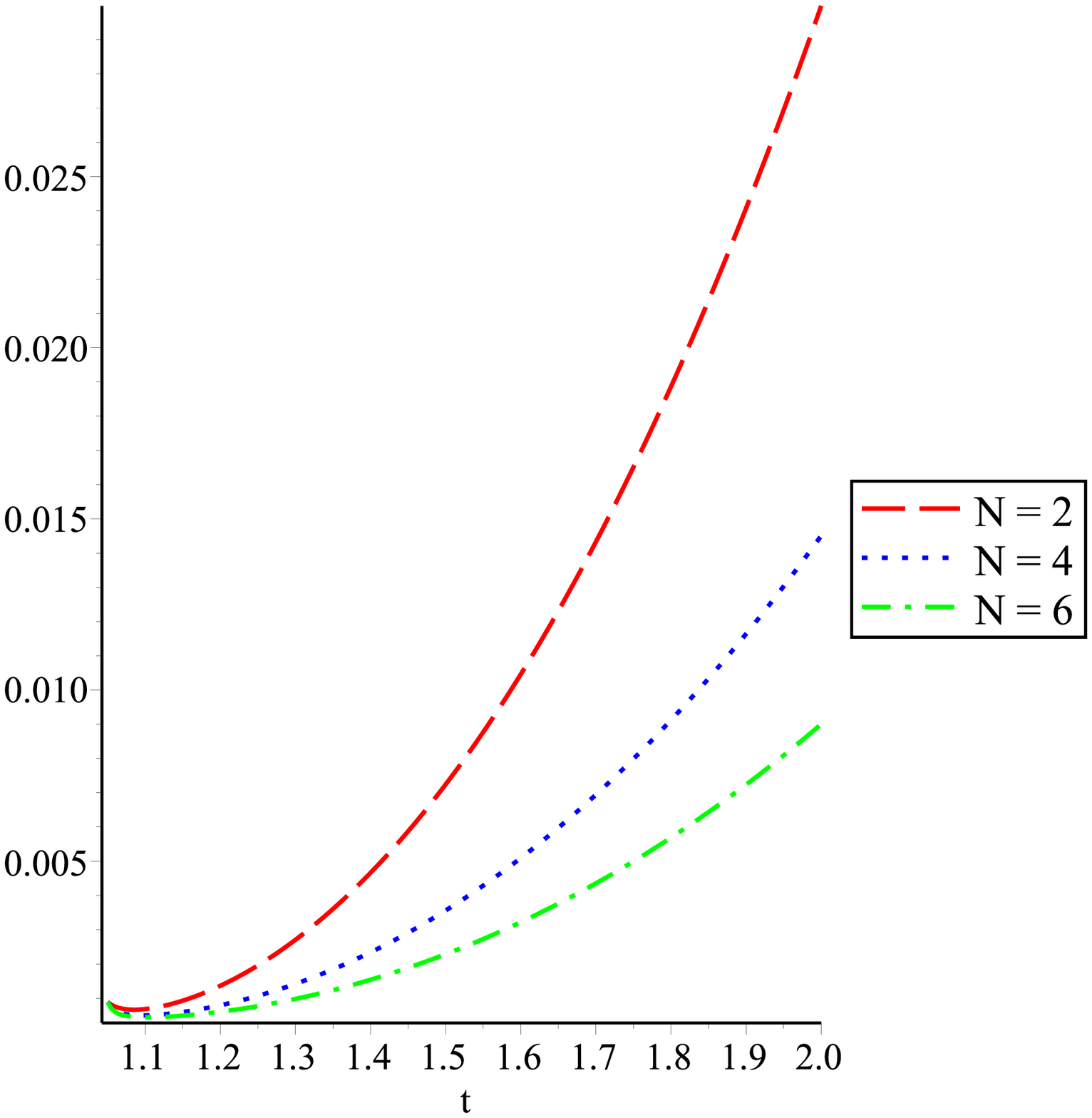}}
\caption{For $\a=0.5$ and $\r=0.6$.}\label{fig:4}
\end{center}
\end{figure}

\begin{figure}[h!]
\begin{center}
\subfigure{\includegraphics[scale=0.3]{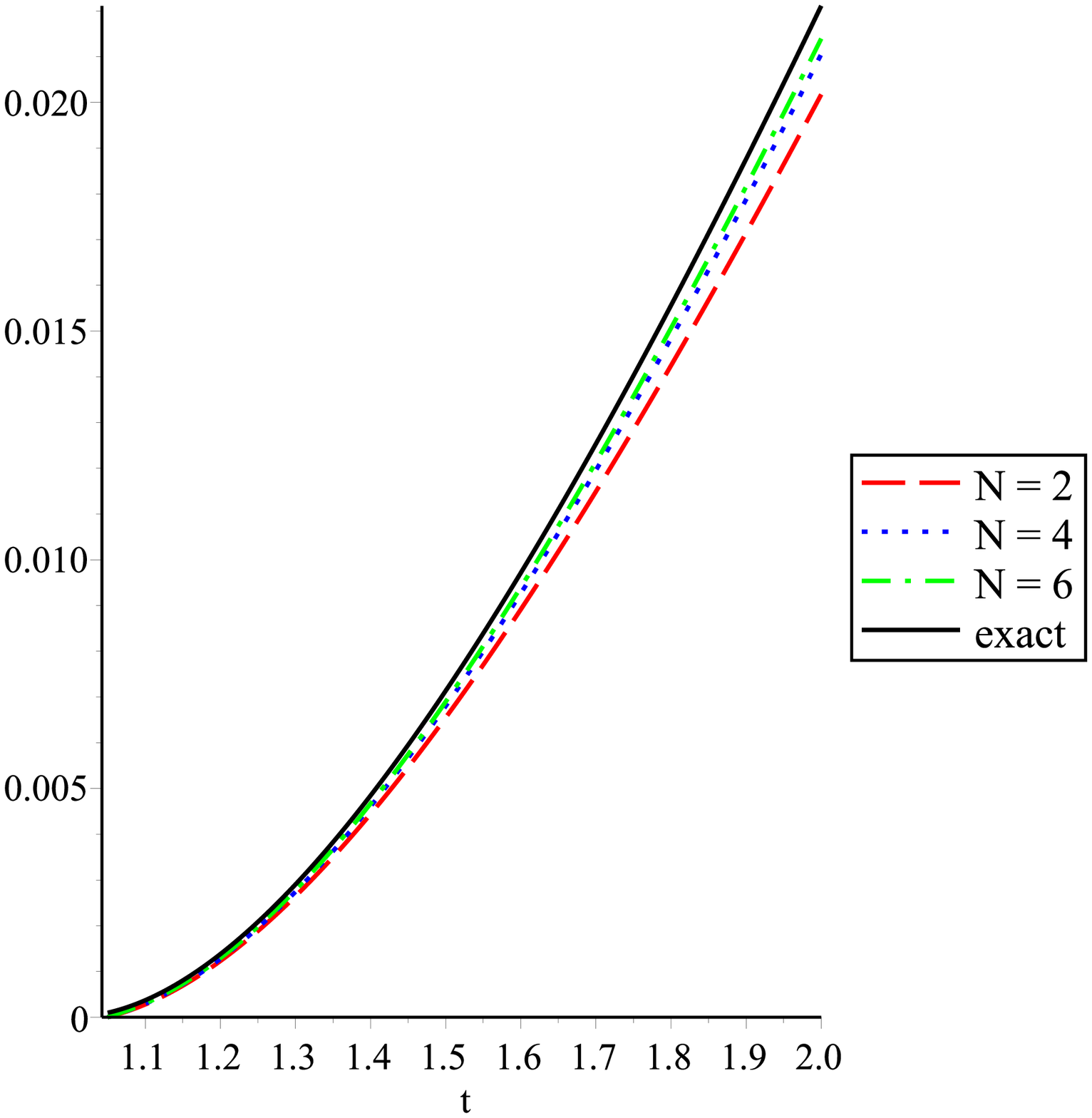}}
\subfigure{\includegraphics[scale=0.3]{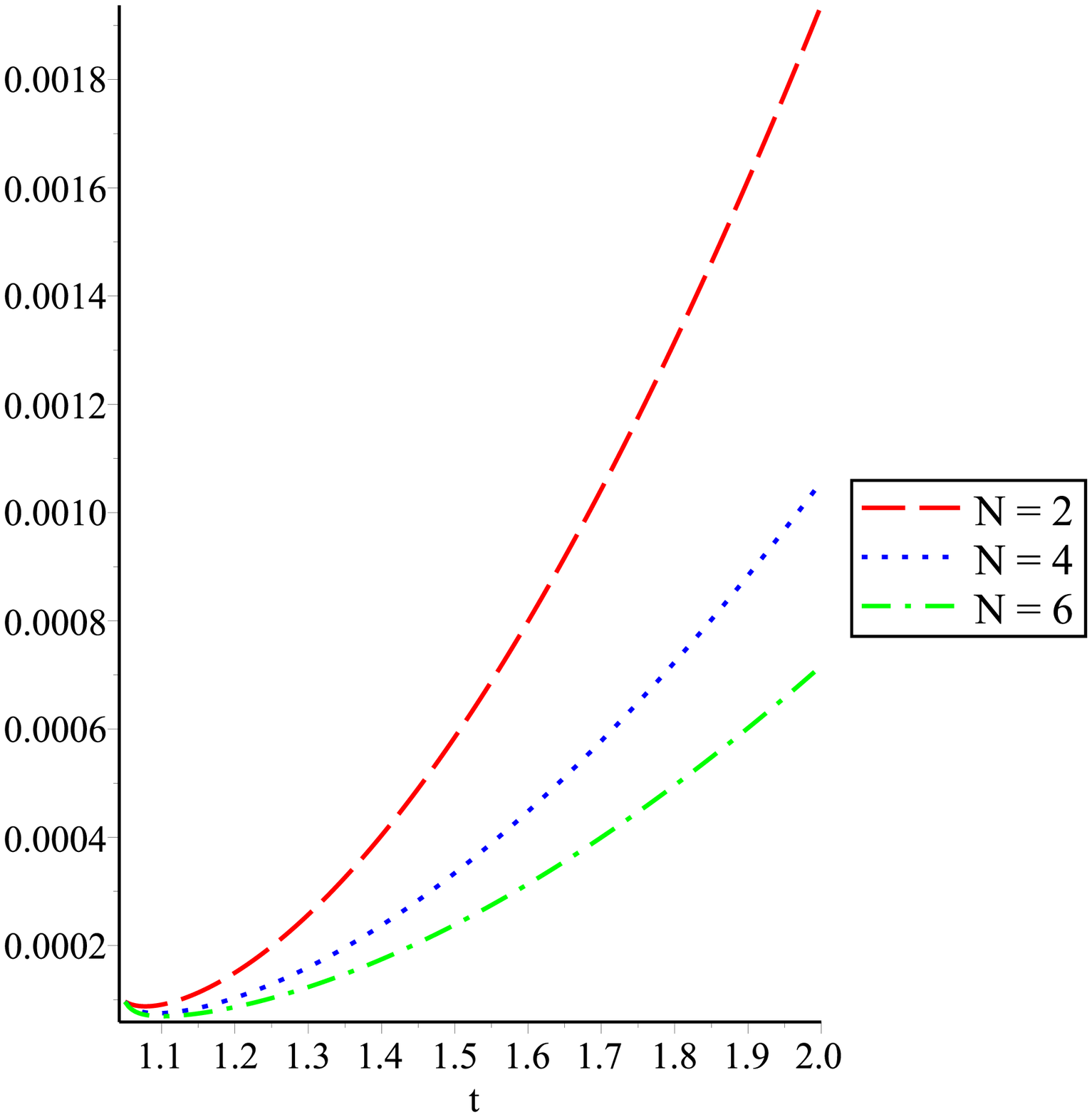}}
\caption{For $\a=0.7$ and $\r=0.2$.}\label{fig:5}
\end{center}
\end{figure}

\begin{figure}[h!]
\begin{center}
\subfigure{\includegraphics[scale=0.3]{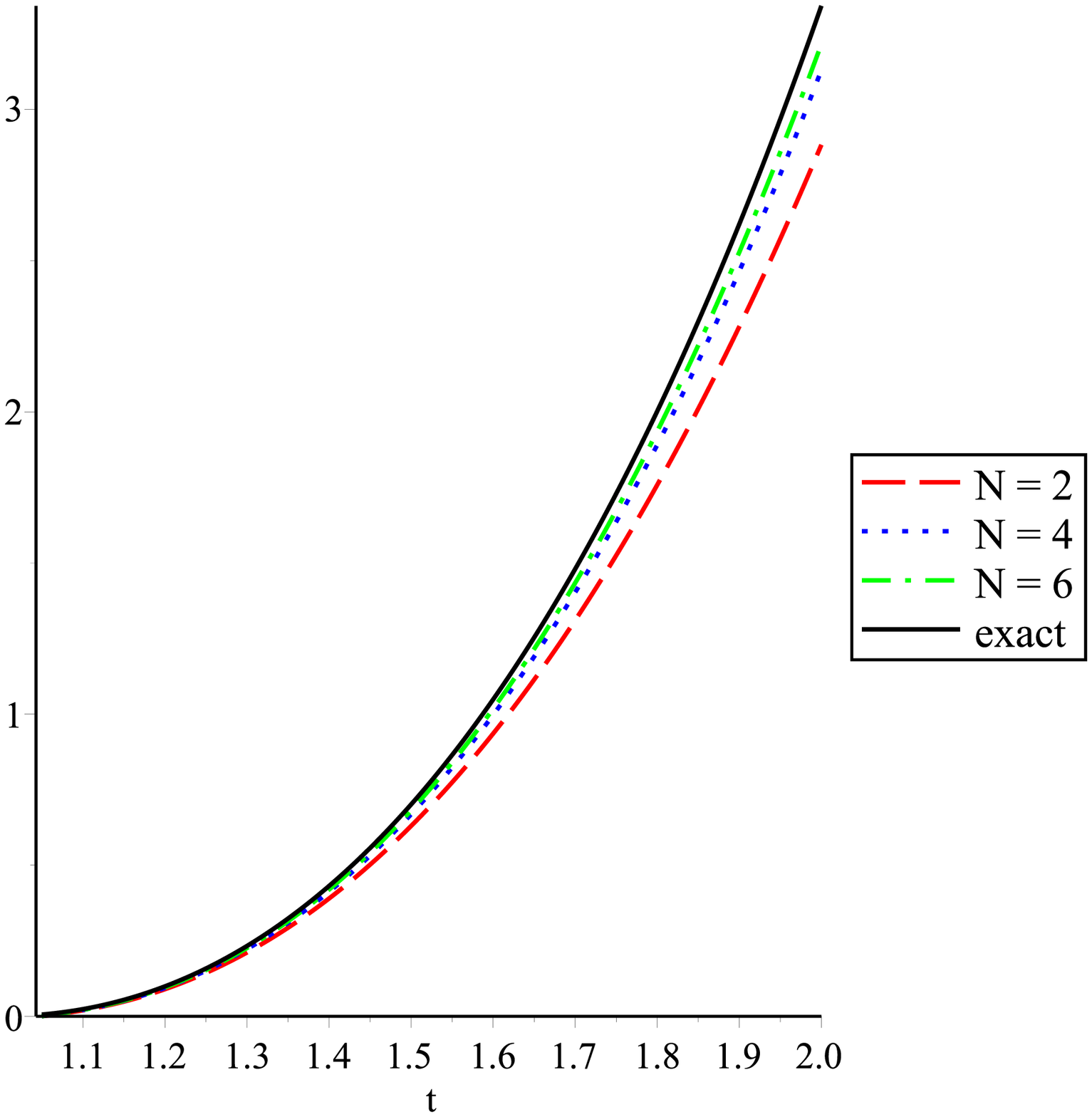}}
\subfigure{\includegraphics[scale=0.3]{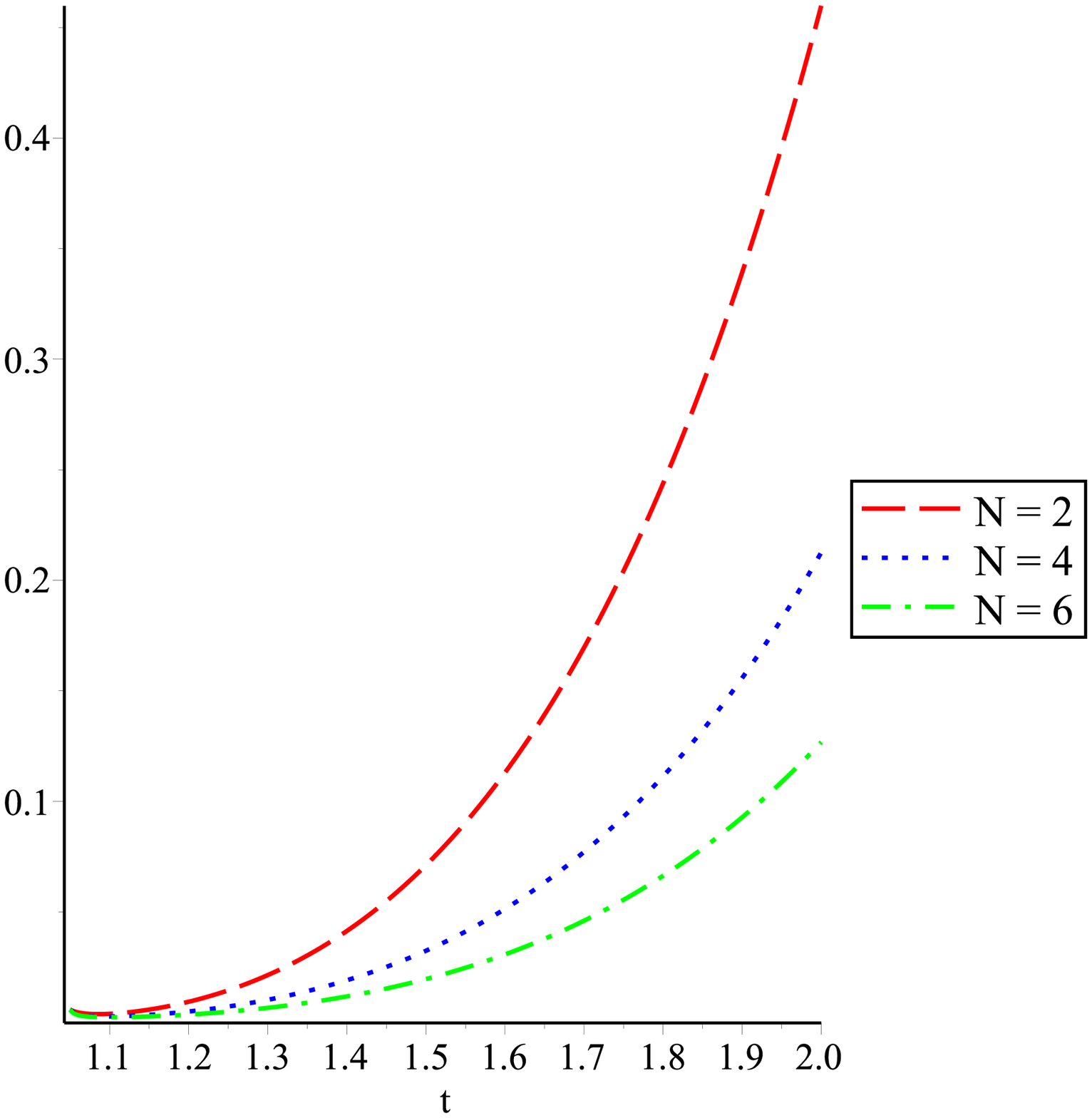}}
\caption{For $\a=0.4$ and $\r=1.5$.}\label{fig:6}
\end{center}
\end{figure}
\end{example}

\begin{example} { The solution to
$$\left\{
\begin{array}{l}
\DS {^CD_{0+}^{\a,\r}} x(t)=\r^\a x(t), \quad t\in[0,1],\\
x(0)=1,\\
\end{array} \right.$$
is the Mittag--Leffler function $x(t)=E_\a(t^{\r\a})$. Replacing the fractional derivative by our approximation formula presented in Theorem~\ref{teo:approx}, we obtain the following system of ordinary differential equations
$$\left\{
\begin{array}{l}
\DS  A_N t^{1-\r \a} x'(t)-\sum_{k=1}^N B_{N,k}t^{\r(1-\a-k)}V_k(t) =\r^\a x(t),\quad t\in[0,1],\\
\DS V'_k(t)=t^{\r(k-1)}x'(t), \quad k=1,\ldots,N,\quad t\in[0,1],\\
x(0)=1,\\
V_k(0)=0, \quad  k=1,\ldots,N.\\
\end{array} \right.$$
Solutions for $N=5,10,15$ and different values of $\a$ and $\r$ are shown in Figures \ref{fig:7} and \ref{fig:8} (on the left side). The errors of the approximation are presented on the right side of the figures.

\begin{figure}[h!]
\begin{center}
\subfigure{\includegraphics[scale=0.3]{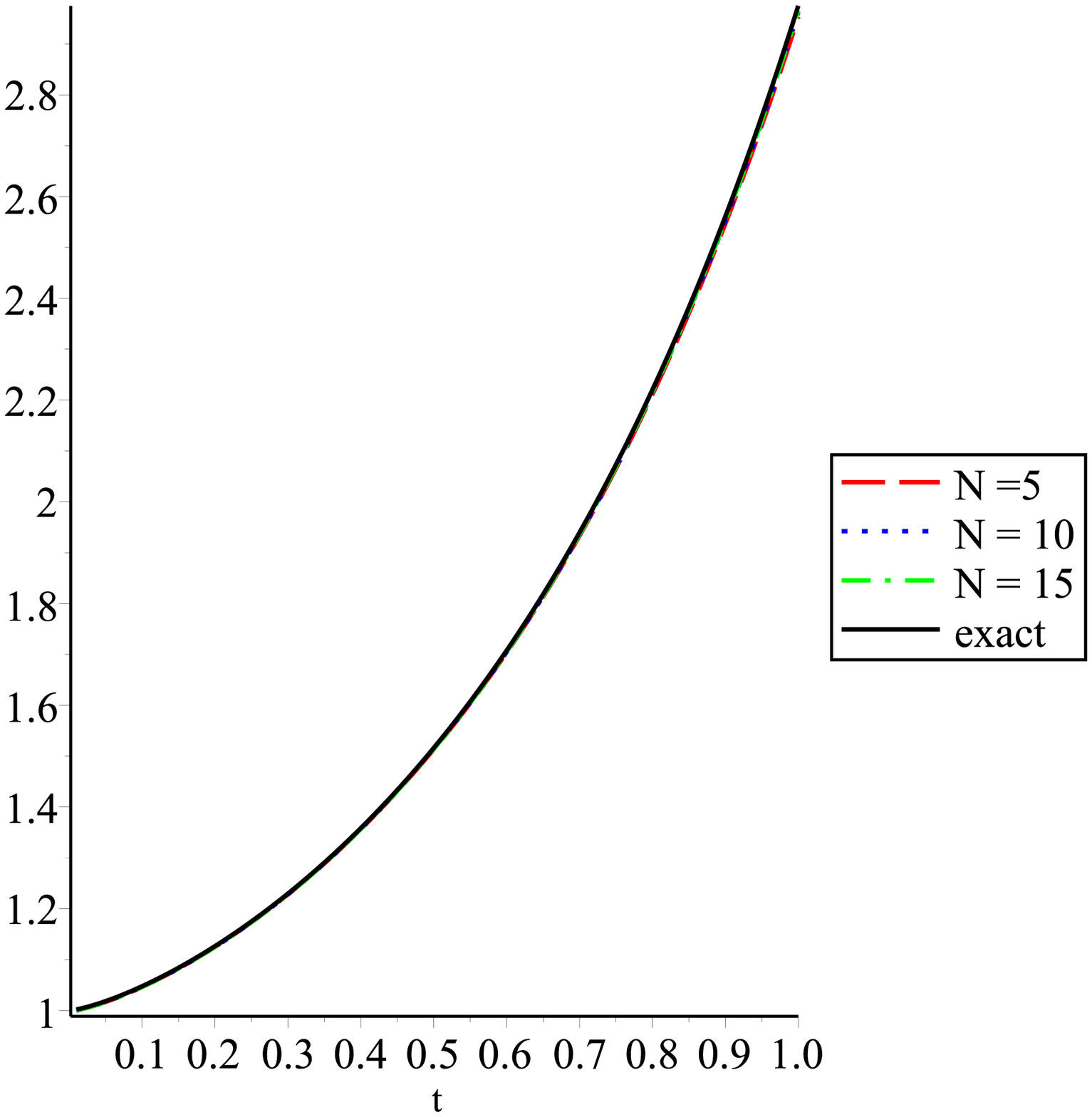}}
\subfigure{\includegraphics[scale=0.3]{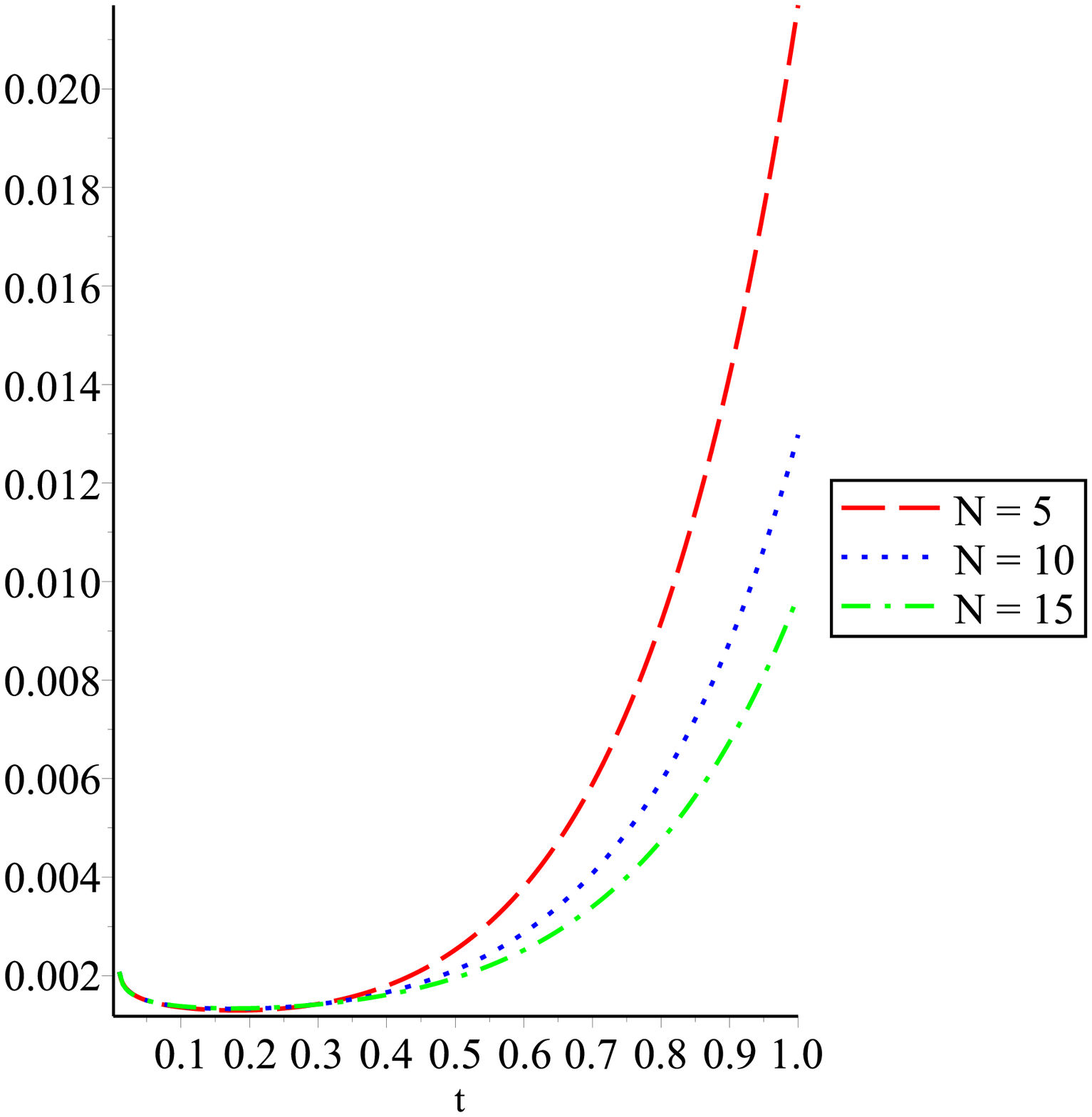}}
\caption{For $\a=0.9$ and $\r=1.5$.}\label{fig:7}
\end{center}
\end{figure}

\begin{figure}[h!]
\begin{center}
\subfigure{\includegraphics[scale=0.3]{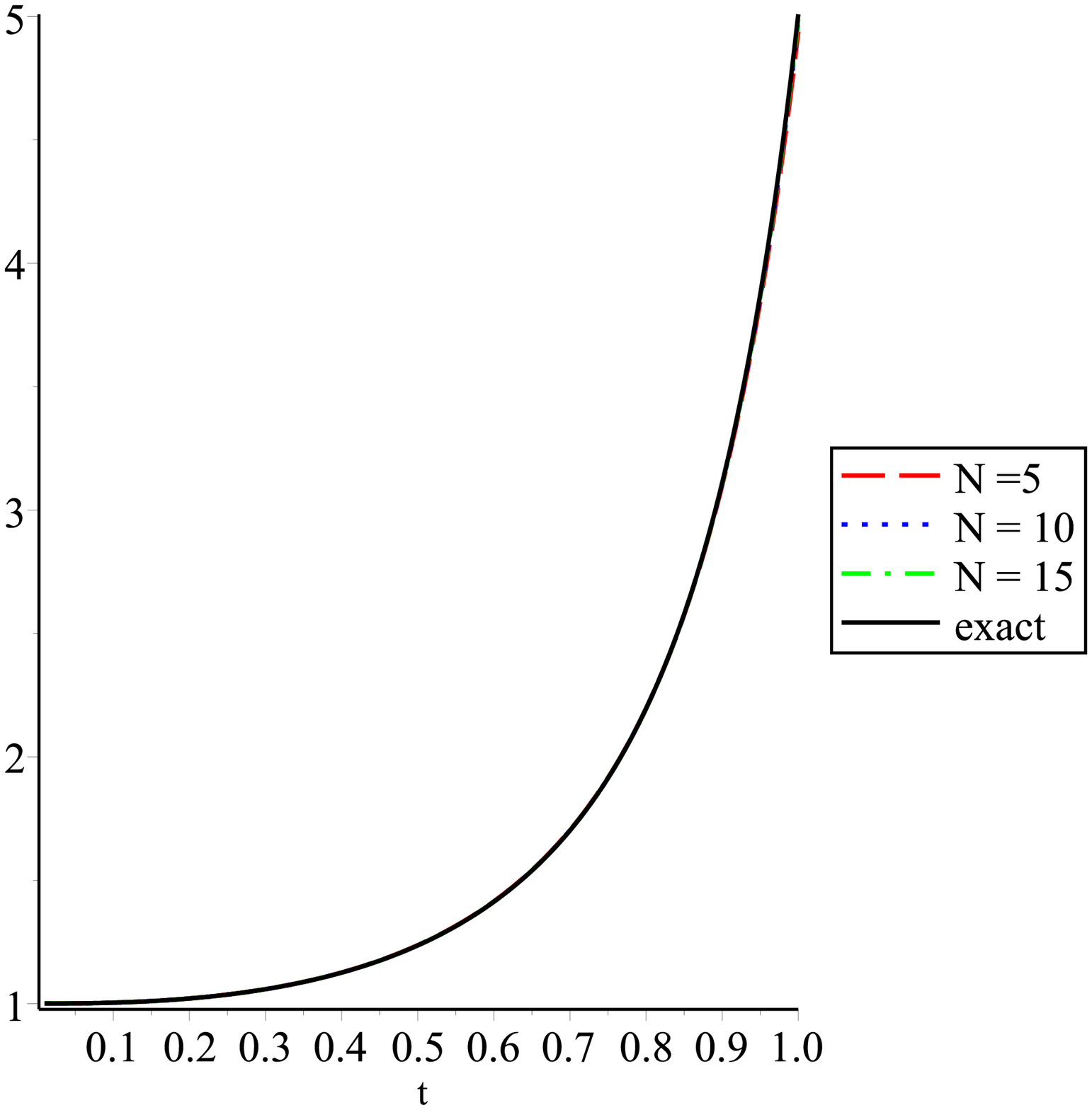}}
\subfigure{\includegraphics[scale=0.3]{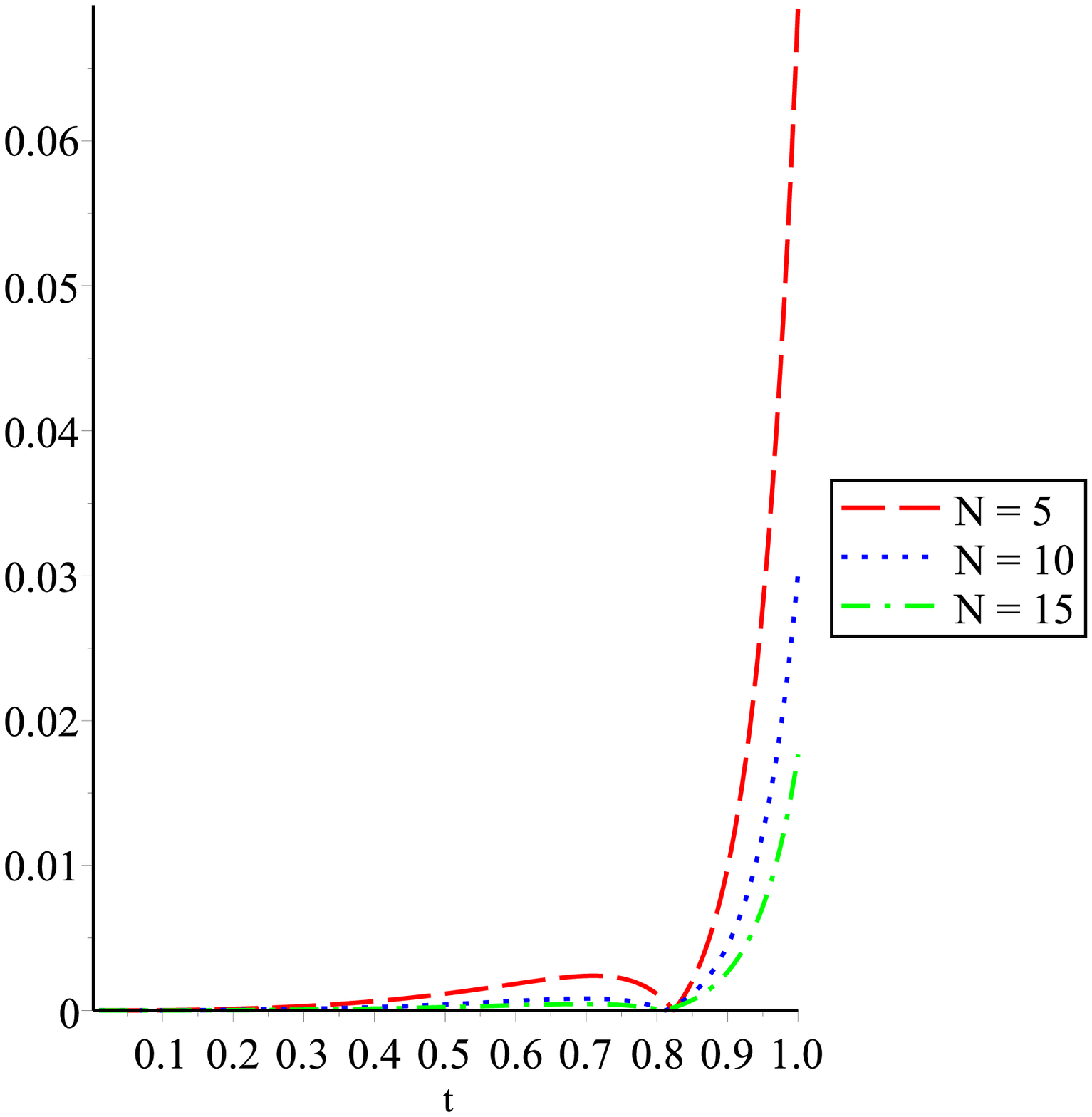}}
\caption{For $\a=0.5$ and $\r=5$.}\label{fig:8}
\end{center}
\end{figure}}
\end{example}

\section{Conclusion}\label{con}

In this paper we present a new fractional operator, which generalizes the Caputo and the Caputo--Hadamard fractional derivative. Some fundamental properties of this operator are presented and proved. Then, an existence and uniqueness theorem for a fractional Cauchy type problem is given.
In order to apply a numerical procedure for solving the mentioned fractional differential problem, we provide a decomposition formula for the new operator in a way that we can rewrite it using only the first-order derivative. Doing so, we can approximate the fractional differential equation by an ordinary one, and thus to determine an approximate solution to the fractional problem. We hope to call the attention of other researchers to consider generalized fractional operators in order to avoid a proliferation of papers devoted to fractional operators.

\section*{Acknowledgments}

The first author is supported by Portuguese funds through the CIDMA - Center for Research and Development in Mathematics and Applications,
and the Portuguese Foundation for Science and Technology (FCT-Funda\c{c}\~ao para a Ci\^encia e a Tecnologia), within project UID/MAT/04106/2013.
A. B. Malinowska is supported by the Bialystok University of Technology grant S/WI/1/2016. T. Odzijewicz by the Warsaw School of Economics grant KAE/S14/35/15.


\end{document}